\documentclass[11pt,letterpaper]{amsart}
\usepackage{amssymb,amsmath,amsthm}
\usepackage{mathrsfs}
\usepackage{graphicx}
\usepackage[colorlinks=true,citecolor=blue,linkcolor=red,urlcolor=black]{hyperref}
\usepackage{xcolor}

\newtheorem{theorem}{Theorem}[section]
\newtheorem{lemma}[theorem]{Lemma}

\newtheorem{proposition}[theorem]{Proposition}

\theoremstyle{remark}
\newtheorem{remark}{Remark}

\numberwithin{equation}{section}

\def\i{\mathrm{i}}
\newcommand{\C}{\mathbb C}
\newcommand{\R}{\mathbb R}
\newcommand{\D}{\mathbb D}
\newcommand{\dd}{\,d}
\newcommand{\dA}{\,dA}
\newcommand{\supp}{\operatorname{supp}}

\newcommand{\bpartial}{\overline\partial}
\newcommand{\pt}{\widetilde p}
\newcommand{\bmu}{\boldsymbol\mu}
\newcommand{\bpsi}{\boldsymbol\psi}

\newcommand{\loc}{\mathrm{loc}}
\DeclareMathOperator{\diag}{diag}
\allowdisplaybreaks[1]

\title[The planar Calder\'on problem]{The planar Calder\'on problem for $L^p$ potentials}
\author[Ali Feizmohammadi]{Ali Feizmohammadi}
\address{Department of Mathematics, University of Toronto, Bahen Centre, 40 St. George St., Toronto, Ontario, Canada, M5S 2E4}
\email{ali.feizmohammadi@utoronto.ca}
\begin{document}

	\begin{abstract}
		We prove that the Dirichlet-to-Neumann map associated to the Schr\"odinger operator $-\Delta+q$ on a planar bounded Lipschitz domain uniquely determines an arbitrary complex-valued potential $q\in L^p$, $p>1$, whenever zero is not a Dirichlet eigenvalue. Our argument uses Nachman's d-bar equation only at sufficiently large complex frequencies, where exceptional points are absent. A new annular frequency estimate converts the first asymptotic coefficient of the complex geometric optics solutions into a pointwise bound for a primitive of the potential difference, reducing the inverse problem to a unique-continuation argument.
	\end{abstract}

	\maketitle
	
	\section{Introduction}
	
	Let $\Omega$ be a planar bounded Lipschitz domain, and let $q\in L^p(\Omega;\C)$, for some $p>1$. We consider the boundary value problem
	\begin{equation}\label{bvp}
		\begin{cases}
			(-\Delta+q)u=0 & \text{in }\Omega,\\
			u=f & \text{on }\partial\Omega.
		\end{cases}
	\end{equation}
	Throughout the paper, all function spaces are over $\C$. We assume that zero is not a Dirichlet eigenvalue of $-\Delta+q$. Under this assumption, it is classical that given any $f\in H^{1/2}(\partial\Omega)$ there is a unique weak solution $u\in H^1(\Omega)$ to the boundary value problem \eqref{bvp}. The associated Dirichlet-to-Neumann (DN) map
	$$
	\Lambda_q:H^{1/2}(\partial\Omega)\longrightarrow H^{-1/2}(\partial\Omega)
	$$
	is defined via the complex bilinear pairing
	\begin{equation}\label{dn_map}
		\langle\Lambda_q f,g\rangle
		=\int_\Omega\big(\nabla u\cdot\nabla w+q u w\big)\dd x,
	\end{equation}
	where $u$ solves \eqref{bvp} and $w\in H^1(\Omega)$ is any function that satisfies $w|_{\partial\Omega}=g$. 
	
	The inverse problem is to determine $q$ given the knowledge of the boundary map $\Lambda_q$. We prove the following uniqueness result.
	
	\begin{theorem}\label{main_thm}
		Let $\Omega$ be a planar bounded domain with Lipschitz boundary. For $j=1,2$, let $q_j\in L^p(\Omega;\C)$ for some $p>1$ and assume that zero is not a Dirichlet eigenvalue for $-\Delta+q_j$ on $\Omega$. Then,
		$$
		\Lambda_{q_1}=\Lambda_{q_2} \qquad \implies \qquad q_1=q_2.
		$$
	\end{theorem}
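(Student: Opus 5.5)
We propose to follow the strategy sketched in the abstract. First, extend $q_j$ by zero outside $\Omega$, so that $q_j\in L^p(\R^2)$ with compact support. Since $\Lambda_{q_1}=\Lambda_{q_2}$, the standard Alessandrini identity gives
\[
\int_\Omega (q_1-q_2)\,u_1u_2\dd x=0
\]
for all $H^1$ solutions $u_j$ of $(-\Delta+q_j)u_j=0$ in $\Omega$. We then construct Faddeev-type complex geometric optics solutions $u_j(x,k)=e^{\i kz}\mu_j(x,k)$, with $z=x_1+\i x_2$ and $k\in\C$. Here $\mu_j-1$ is obtained from the Faddeev Green's function $g_k$ via a Neumann series. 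After replacing $q$ by a suitable power-type localization, we need $\|g_k*(q\,\cdot)\|$ small on a weighted $L^r$ space for $|k|\ge R$. This holds for $q\in L^p$, $p>1$, by the decay $\|g_k\|$ of order $|k|^{-\alpha}$ on appropriate Lorentz/Lebesgue spaces with $\alpha>0$ depending on $p$. Consequently no exceptional points occur for $|k|\ge R$, and $\mu_j$ exists uniquely there. Boundary determination (the $\mu_j$ restricted to $\R^2\setminus\Omega$ solve a boundary integral equation involving $\Lambda_{q_j}$) then shows $\mu_1=\mu_2$ outside $\Omega$ for $|k|\ge R$. In particular the scattering transforms agree,
\[
t_1(k)=t_2(k)\quad\text{for }|k|\ge R,
\]
and by Nachman's $\bpartial_k$ equation $\bpartial_k\mu_j=\frac{t_j(k)}{4\pi\bar k}e_{-x}(k)\overline{\mu_j}$, both $\mu_j$ satisfy the same d-bar equation on $|k|>R$.

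Next, we extract the first asymptotic coefficient. As $|k|\to\infty$,
\[
\mu_j(x,k)=1+\frac{1}{k}a_j(x)+o(1/k),
\]
where $\bpartial a_j=\tfrac{\i}{4}q_j$, or a similar normalization; thus $a_j$ is, up to constants, the Cauchy transform of $q_j$. The differences $w=\mu_1-\mu_2$ solve the linear homogeneous d-bar equation in $k$ on the exterior region $|k|>R$, with a common coefficient, and agree for $x$ outside $\Omega$. The key new step is the \emph{annular frequency estimate}. Integrating the d-bar equation over annuli $R<|k|<2^mR$ and using a generalized Cauchy formula (the Liouville-type argument of Nachman, restricted to an exterior domain), we aim to bound
\[
|a_1(x)-a_2(x)|\le C\,|w|\text{ on a circle }|k|=R',
\]
and then show that the right side is controlled by $|a_1-a_2|$ itself times a small factor, or directly that $\Phi=a_1-a_2$ satisfies a pointwise inequality of the form $|\bpartial\Phi|\le V|\Phi|$. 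The latter is a differential inequality with $V\in L^p$, since $\bpartial\Phi=c(q_1-q_2)$, and the estimate should express $q_1-q_2$ in terms of $\Phi$ weighted by the $q_j$.

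Finally, $\Phi$ is compactly supported, being zero outside $\Omega$ by boundary determination together with decay at infinity. A Carleman-type unique continuation theorem for $|\bpartial\Phi|\le V|\Phi|$ with $V\in L^p_{\loc}$, $p>1$ (valid in the plane by similarity principle / Bers–Vekua theory) then forces $\Phi\equiv0$. Hence $q_1-q_2=c^{-1}\bpartial\Phi=0$.

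The main obstacle is the annular frequency estimate: producing a genuinely \emph{pointwise-in-$x$} bound for the primitive $\Phi$ in terms of $|\Phi|$ using only frequencies $|k|\ge R$. This must work with only $L^p$ control of $q$, so the error term in the asymptotics is merely $o(1/k)$ in an averaged sense. I would first attempt it by averaging $k\,w(x,k)$ over dyadic annuli, showing that the annular average converges to $\Phi(x)$ for a.e. $x$, while the d-bar equation bounds the oscillating remainder by $|t(k)|/|k|$, which is square integrable at infinity.
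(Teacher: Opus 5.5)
\textbf{There is a genuine gap, at the step you identify as the key one.} The annular estimate only works in one direction. It bounds the asymptotic coefficient $\Phi(x)=a_1(x)-a_2(x)$ (which equals $-\i A$ with $A=\frac14\bpartial^{-1}(q_1-q_2)$) by the size of $w(x,\cdot)=\mu_1(x,\cdot)-\mu_2(x,\cdot)$ on a bounded set of frequencies. Nothing in your plan goes back the other way:
\begin{itemize}
\item nothing bounds $w(x,k)$ at finite $k$ by $|\Phi(x)|$;
\item nothing bounds $q_1-q_2$ (a constant multiple of $\bpartial\Phi$) pointwise by $|\Phi|$.
\end{itemize}
The obstruction is visible in the spatial equation. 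The function $v=\psi_1-\psi_2$ solves $(-\Delta+q_1)v=-(q_1-q_2)\psi_2$. Its source is the potential difference itself, which is one derivative stronger than the primitive that the annular estimate controls. So the hoped-for inequality $|\bpartial\Phi|\le V|\Phi|$ has no derivation, and a bound for $\Phi$ in terms of $|\Phi|$ is not a closed system. Averaging $kw$ over dyadic annuli to recover $\Phi(x)$ only re-proves the direction you already have.

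\textbf{The missing idea is to change the object to which unique continuation is applied.} Let $\omega$ be the normalized logarithmic potential of $q_1-q_2$. Then $\Delta\omega=q_1-q_2$, $\partial\omega=A$, $\bpartial\omega=\overline{A^c}$, and $\omega$ vanishes near $\partial B$ by exterior agreement. The paper considers the frequency family $U(z,k)=\bigl(\psi_1-e^{\omega}\psi_2,\ \psi_1^c-e^{\overline\omega}\psi_2^c\bigr)$. It has two key properties.
\begin{itemize}
\item In its spatial equation the potential difference cancels against $\Delta\omega$. What remains is the source $e^{\omega}\bigl(2\nabla\omega\cdot\nabla\psi_2+(\nabla\omega\cdot\nabla\omega)\psi_2\bigr)$, which is linear and quadratic in $\nabla\omega$.
\item The weight $\diag(e^{-\omega},e^{-\overline\omega})$ intertwines with the off-diagonal coefficient matrix. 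Hence $e^{-\i kz}U$ still satisfies the same homogeneous $\bpartial_k$ system.
\end{itemize}
The annular estimate is a similarity principle after the substitution $\kappa=R_0/k$. It is applied twice: to the limit at infinity, $c=e^{-\omega_1}-e^{-\omega_2}$, and to the first coefficient, $-\i(A,A^c)$. This yields $|\nabla\omega(z)|\le C\|U(z,\cdot)\|_{L^2(\mathcal K_{R_0})}$ with $C$ independent of $z$.

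That estimate closes into $|\Delta_zU|_{\mathcal H}\le V|U|_{\mathcal H}$, with $V\in L^p$ and $\mathcal H=L^2(\mathcal K_{R_0};\C^2)$. Unique continuation is then applied to the Hilbert-valued $U$, not to $\Phi$, through a Laplacian Carleman estimate that absorbs $L^p$ potentials for every $p>1$. The conclusion $\nabla\omega=0$ then follows from the primitive estimate.

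\textbf{A second, independent problem concerns complex-valued potentials.} Your $\bpartial_k$ equation is only correct when $q$ is real. For complex $q$ the equation couples to the conjugate potential:
$\partial_{\overline k}\mu_q=\frac{t_q}{4\pi\overline k}e_{-k}\overline{\mu_{\overline q}}$.
You must therefore work with the pair $(\mu_q,\mu_{\overline q})$ and an off-diagonal coefficient. This pairing is exactly what makes the modification by $e^{-\omega}$ compatible with the antilinear equation. It is also needed to recover $A^c$, and hence the full gradient of $\omega$. Finally, the Alessandrini identity plays no role in the argument.
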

	
	The study of these questions originates in Calder\'on's seminal work \cite{Calderon1980}, where the inverse conductivity problem was formulated and uniqueness was established for its linearization at a constant conductivity. In dimensions $n\geq3$, Sylvester and Uhlmann \cite{SU1987}
	established global uniqueness for the Schr\"odinger inverse boundary
	value problem with smooth potentials by constructing complex
	geometric optics (CGO) solutions. The extension to bounded potentials
	was obtained in
	\cite[Theorem~1.6]{NSU88}.
	In dimension two, Nachman \cite{Nachman1996} proved uniqueness and
	reconstruction for compactly supported real-valued potentials
	$q\in L^p(\R^2)$, $p>1$, under the additional assumption that
	\begin{equation}\label{q_cond}
		q=\frac{\Delta\sqrt{\gamma}}{\sqrt\gamma}
		\quad\text{in }\R^2,
		\qquad
		\gamma\in W^{2,p}_{\loc}(\R^2),
		\qquad
		0<c_0\leq\gamma\leq C_0.
	\end{equation}
	Under the above assumptions the Schr\"odinger equation admits a globally bounded,
	uniformly positive solution. This assumption connects
	the problem to the inverse conductivity problem: the substitution $v=\sqrt{\gamma} u$ transforms
	the conductivity equation $\nabla\cdot(\gamma\nabla u)=0$ into $(-\Delta+q)v=0$. The conductivity structure plays an essential role in Nachman's global d-bar method: it excludes exceptional points and
	provides the control near zero complex frequency needed to solve
	the d-bar problem on the whole complex plane
	\cite{Nachman1996}. 
	
	Let us also remark that the distinction between the planar and higher-dimensional inverse problem above
	is already apparent from a formal count of variables. The
	Schwartz kernel of the DN map depends on $2(n-1)$ variables,
	whereas the unknown potential depends on $n$ variables. The problem
	is therefore formally overdetermined for $n\geq3$ and formally
	determined for $n=2$. As emphasized in \cite{Nachman1996}, this
	loss of overdeterminacy is a fundamental difficulty in dimension two
	and is reflected in the different uses of CGO solutions in the two settings. In particular, while the large complex-frequency asymptotics of CGO solutions sufficed in dimensions $n\geq 3$, Nachman's argument relied on information at all complex-frequencies leading to the conductivity type assumption \eqref{q_cond} on the potential.
	
	\subsection{Previous literature}
	
	Following Nachman's global uniqueness
	theorem \cite{Nachman1996}, Brown and Uhlmann \cite{BU97}
	established uniqueness for uniformly positive conductivities
	in $W^{1,p}$, $p>2$. Astala and P\"aiv\"arinta \cite{AP06}
	subsequently resolved the planar conductivity problem
	for bounded measurable conductivities.
	For unbounded conductivities on simply connected domains,
	C\^arstea and Wang \cite{CW18} proved uniqueness in $W^{1,2}$,
	assuming a positive lower bound and smallness of
	$\|\nabla\log\gamma\|_{L^2}$.
	Nachman, Regev, and Tataru \cite{NRT20} established a nonlinear
	Plancherel theorem and used it to reconstruct positive
	conductivities satisfying $\log\gamma\in\dot H^1$, with the
	boundary normalization $\gamma=1$ on $\partial\Omega$.
	Their result requires no smallness assumption and permits
	both unboundedness and degeneracy.
	
	For the planar Schr\"odinger equation, Sun and Uhlmann
	\cite{SU91} established local uniqueness near generic potentials
	and global uniqueness for generic pairs of potentials.
	Isakov and Nachman \cite[Theorem~1.3]{IN95} proved uniqueness
	for real-valued $L^p$ potentials, $p>1$, with well-defined
	DN maps, assuming in addition that the first
	Dirichlet eigenvalue of one of the two operators is strictly
	positive.
	Lakshtanov and Vainberg \cite{LV17} developed a reconstruction
	procedure for certain compactly supported real-valued $L^p$ potentials,
	$p>1$, allowing exceptional complex frequencies.
	For each prescribed spatial point, their reconstruction equation
	is uniquely solvable nearby for an open dense set of potentials;
	the generic set may depend on the point, and the neighborhood
	may depend on the potential \cite[Theorem~2.3]{LV17}.
	
	All of the aforementioned results are based on CGO solutions with linear phases. A different approach was introduced by Bukhgeim \cite{Bukhgeim2008}, who constructed CGO
	solutions with quadratic phases and proved uniqueness for
	continuously differentiable potentials. The critical point of the phase can be prescribed at an
	arbitrary interior point. Substitution of suitable pairs of
	these solutions into the well-known Alessandrini identity then
	identifies the potential at that point by stationary phase.
	Thus, the argument uses only large values of the parameter
	and does not require solving a global d-bar problem
	in the complex-frequency plane.
	
	At lower regularity, Bl\aa sten, Imanuvilov, and Yamamoto \cite{BIY15}
	proved uniqueness for $L^p$ potentials with $p>2$.
	Bl\aa sten, Tzou, and Wang \cite{BTW20} extended uniqueness
	to complex-valued potentials in $L^p$, $p>4/3$.
	Their construction of CGO solutions
	is valid for every $p>1$, while their uniqueness argument
	requires $p>4/3$. For a further account of Nachman's d-bar method and Bukhgeim's CGO method with phases that admit critical points, including their use on Riemannian surfaces, we refer to the survey \cite{GT13}. 
	
	Let us also briefly recall the connection with inverse scattering at a fixed positive energy.
	The relation between scattering data and exterior Green
	functions goes back to Berezanskii \cite{Berezanskii1958}.
	The passage from these Green functions to DN maps is
	developed in \cite[Theorem~1.6]{Nachman1988} and
	\cite[Section~2]{SU93}; see also \cite{GKLU09}.
	Novikov \cite{Novikov1992} developed a reconstruction framework
	for the two-dimensional fixed-energy problem using generalized
	scattering data.
	Sun and Uhlmann \cite{SU93} showed that fixed-energy scattering
	data determine the interfaces and jumps of compactly supported
	piecewise smooth potentials.
	For unbounded potentials, Astala, Faraco, and Rogers
	\cite{AFR16} obtained almost everywhere reconstruction for
	compactly supported $H^{1/2}$ potentials from boundary
	measurements and fixed-energy scattering data. 
	
	Before closing this section, let us mention that while completing this manuscript, the author became aware of independent work of C. I. C\^arstea and J.-N. Wang, which proves the same theorem by substantially different methods. The two works were developed independently, and the respective authors agreed to post their manuscripts simultaneously on arXiv.
	
	\subsection{Outline of the main ideas}
	
	The proof connects the asymptotic behaviour of CGO solutions with unique continuation in the spatial variable.
	Its main ingredient is a pointwise estimate for primitives of
	the potential difference: their values are identified by asymptotic
	coefficients at infinity, while their size is controlled by
	differences of suitable modified solutions on a fixed annulus of large
	complex frequencies. This yields
	a differential inequality to which a Carleman type unique continuation argument
	applies.
	
	Let us first describe the solutions in question. After extending
	$q$ by zero outside $\Omega$, we consider solutions of
	$(-\Delta+q)\psi_q=0$ on $\C$ of the form
	$$
	\psi_q(z,k)=e^{\i kz}\mu_q(z,k),
	\qquad
	\|\mu_q(\cdot,k)-1\|_{L^\infty(\C)}
	\longrightarrow0
	\quad\text{as $|k|\longrightarrow\infty$}.
	$$
	Here $z=x^1+\i x^2$ is the spatial variable and $k\in\C$ is the
	complex frequency. Nachman's estimates for the Faddeev Green
	function \cite{Nachman1996} give existence and uniqueness of these
	CGO solutions for sufficiently large $|k|$
	by contraction, for arbitrary complex-valued $L^p$ potentials.
	
	The first asymptotic correction to $\mu_q$ identifies a primitive
	of $q$. More precisely, let $\omega_q$ be the convolution of $q$
	with $(2\pi)^{-1}\log|\cdot|$, normalized to vanish at a fixed point
	outside a disk $B$ containing $\overline\Omega$. Then
	$\Delta\omega_q=q$. Reducing to $1<p<2$ and setting
	$\pt=2p/(2-p)$, we prove in Lemma~\ref{lem_firstcoef} that
	$$
	k\bigl(\mu_q(\cdot,k)-1\bigr)
	\longrightarrow-\i\,\partial\omega_q
	\quad\text{in }L^{\pt}(\C),
	\qquad |k|\longrightarrow\infty,
	$$
	where $\partial=(\partial_{x^1}-\i\partial_{x^2})/2$.
	Together with the corresponding expansion for $\overline q$,
	this determines the full spatial gradient of $\omega_q$.
	For two potentials with the same boundary map, exterior agreement
	of the solutions consequently gives agreement of their normalized
	logarithmic potentials near $\partial B$.
	
	The idea behind the annular estimate comes from an elementary
	observation in complex analysis. A bounded holomorphic function
	outside a disk has a limit at infinity, and this limit is controlled
	by its $L^2$ norm on a surrounding annulus. Indeed, inversion
	brings infinity to a removable singularity, after which the
	mean-value identity gives the estimate. We establish a corresponding
	principle for the d-bar equations satisfied by the CGO amplitudes.
	For complex potentials, these equations couple the amplitudes for
	$q$ and $\overline q$; equality of the DN maps gives a common
	homogeneous system for their paired differences. Fixing a
	sufficiently large $R_0$, we work with
	$$
	\mathcal K_{R_0}=\{k\in\C:R_0<|k|<2R_0\}.
	$$
	Under $\kappa=R_0/k$, the relevant equations have coefficients
	small in $L^\tau$ on the unit disk for some $\tau>2$.
	A similarity argument writes each solution as an invertible
	matrix times a holomorphic vector, with uniform bounds for
	the matrix and its inverse. Removability at the origin then
	yields annular control of the asymptotic coefficients, with
	constants independent of the spatial point.
	
	To connect this estimate with the spatial equation, there is
	a further difficulty: the controlled quantity is a primitive
	of the potential difference. Indeed, if
	$v=\psi_{q_1}-\psi_{q_2}$, then
	$$
	(-\Delta+q_1)v=-(q_1-q_2)\psi_{q_2},
	$$
	whereas the asymptotics above identify the gradient of
	$\omega=\omega_{q_1}-\omega_{q_2}$, with $\omega$ satisfying
	$\Delta\omega=q_1-q_2$.
	This motivates the comparison family
	$$
	U(z,k)=
	\begin{pmatrix}
		\psi_{q_1}(z,k)-e^{\omega(z)}\psi_{q_2}(z,k)\\
		\psi_{\overline{q_1}}(z,k)
		-e^{\overline{\omega(z)}}\psi_{\overline{q_2}}(z,k)
	\end{pmatrix}.
	$$
	In its spatial equation, the potential difference cancels
	against $\Delta\omega$, leaving terms linear and quadratic
	in $\nabla\omega$. At the same time, this modification must
	preserve the complex-frequency equation. For complex-valued
	$\omega$, this compatibility is not automatic: multiplication
	by a complex function need not preserve an antilinear d-bar
	equation. The pairing above resolves it: the conjugate
	exponential factors and the off-diagonal structure of the
	coupled system ensure that $e^{-\i kz}U$ satisfies the same
	homogeneous system as the original paired difference.
	
	Applying the annular estimates to the limit of $e^{-\i kz}U$
	at infinity and to the first asymptotic coefficient of the
	original paired amplitude difference gives
	$$
	|\nabla\omega(z)|
	\leq C\left(
	\int_{\mathcal K_{R_0}}|U(z,k)|^2\dA(k)
	\right)^{1/2}
	\quad\text{for almost every $z\in B$}.
	$$
	Thus the primitive identified at infinity is controlled by the
	annular norm of the comparison family at the same spatial point.
	This estimate is derived under equality of the DN maps,
	which supplies the common homogeneous d-bar system. Its pointwise
	form allows us to insert a spatial Carleman weight without
	changing the constant.
	
	Finally, regard $U$ as a function of $z$ with values in
	$\mathcal H=L^2(\mathcal K_{R_0};\C^2)$.
	The primitive estimate controls the inhomogeneous terms
	in its spatial equation and gives
	$$
	\big|\Delta_z U(z)\big|_{\mathcal H}
	\leq V(z)\big|U(z)\big|_{\mathcal H},
	\qquad V\in L^p(B).
	$$
	Here $U\in W^{1,\pt}(B;\mathcal H)$ and
	$\Delta U\in L^p(B;\mathcal H)$.
	The singular-weight Carleman estimate in
	Section~\ref{sec_carleman} applies at precisely this regularity;
	localization allows the term involving $V$ to be absorbed
	for every $p>1$. Exterior agreement makes $U$ vanish near
	$\partial B$, and unique continuation gives $U=0$ throughout
	$B$. The primitive estimate then implies $\nabla\omega=0$,
	and hence $q_1-q_2=\Delta\omega=0$.

	\subsection{Organization of the paper}
	
	Section~\ref{sec_preliminaries} fixes the notations in the paper and also collects the fundamental preliminary results
	on the CGO solutions and the d-bar equation.
	In Section~\ref{sec_primitives}, we construct the primitives of the potentials,
	establish their exterior agreement, and derive the paired
	complex-frequency equations. The fundamental primitive estimate
	is proved in Section~\ref{sec_fundamental}.
	Section~\ref{sec_carleman} establishes the Hilbert space valued
	unique continuation result needed for the proof of
	Theorem~\ref{main_thm}, which is completed in
	Section~\ref{sec_uniqueness}.

	\section{Preliminaries}\label{sec_preliminaries}
	This section collects the preliminary results used throughout
	the paper. We begin by fixing some notation and then introduce CGO solutions and establish their dependence on the complex frequency $k$, the associated
	d-bar equation, and the asymptotic behaviour needed below.
	Our notation and treatment follow Nachman \cite{Nachman1996}
	closely. We work at sufficiently large complex frequencies only,
	where the estimates for the Faddeev Green function yield
	solvability by contraction. We also record explicitly the
	coupling between the equations for $q$ and $\overline q$ that
	arises for complex-valued potentials.
	
	Throughout the paper, $C>0$ denotes a constant that may change
	from line to line. Dependence on particular parameters is
	indicated when relevant. Unless otherwise specified, constants
	in estimates involving the complex frequency $k$ are independent
	of $k$ in the stated range.
	
	\subsection{Cauchy transforms and the Faddeev Green function}
	
	We identify $\R^2$ with $\C$ by $z=x^1+\i x^2$ and set
	$$
	\partial=\frac12\left(\frac{\partial}{\partial x^1}-\i \frac{\partial}{\partial x^2}\right),\qquad
	\bpartial=\frac12\left(\frac{\partial}{\partial x^1}+\i \frac{\partial}{\partial x^2}\right),\qquad
	\Delta=4\bpartial\partial.
	$$
	Fix an open disk $B$ centred at the origin such that 
	\begin{equation}\label{def_ball}
		\overline\Omega\subset B,\end{equation} 
	and extend each potential by zero to $\C$. Since $\Omega$ is bounded, it suffices to work with an exponent $1<p<2$. We fix
	\begin{equation}\label{exponents}
		p'=\frac{p}{p-1},\qquad \pt=\frac{2p}{2-p},\qquad
		\frac{2}{\pt}<\sigma<1.
	\end{equation}
	
	We use the solid Cauchy transforms
	\begin{equation}\label{solid_cauchy}
		\bpartial^{-1}f(z)=\frac1\pi\int_\C\frac{f(w)}{z-w}\dA(w),\qquad
		\partial^{-1}f(z)=\frac1\pi\int_\C\frac{f(w)}{\overline z-\overline w}\dA(w),
	\end{equation}
	where $\dA(w)= \frac{\i}{2}\,dw \wedge d\overline{w}$. They are bounded from $L^p(\C)$ to $L^{\pt}(\C)$, and the Beurling transform $\partial\bpartial^{-1}$ is bounded on $L^p(\C)$.
	
	For $k\in\C\setminus\{0\}$, and any $f\in L^p(\C)$ write
	\begin{equation}\label{T_def}
		e_k(z)=e^{\i (kz+\overline k\overline z)},\qquad
		(\partial + \i k)^{-1} f:=e_{-k}\partial^{-1}(e_k f).
	\end{equation}
	Thus $|e_k|=1$, $(\partial+\i k) (\partial+\i k)^{-1}f=f$, and following \cite[Lemma 1.2(i)]{Nachman1996}, there holds
	\begin{equation}\label{Tkbound}
		\|(\partial+\i k)^{-1} f\|_{L^{\pt}(\C)}\leq C_p\|f\|_{L^p(\C)}.
	\end{equation}
	Let us consider the conjugated Laplacian
	\begin{equation}\label{conj_Laplace}
		P_k=-e^{-\i k z}\Delta (e^{\i kz} \cdot)=-\Delta-4\i k\bpartial.
	\end{equation}
	We recall from \cite[Lemma 1.3]{Nachman1996} that given any $f\in L^{p}(\C)$, the equation 
	\begin{equation}\label{eq_conj_Laplace}
		P_k u = f \quad \text{in $\C$}
	\end{equation}
	admits a unique solution 
	$$u:=g_k*f \in W^{1,\pt}(\C)\cap W^{2,p}_{\mathrm{loc}}(\C)$$ with $g_k$ the so-called Faddeev fundamental solution with Fourier multiplier $(|\xi|^2+2k(\xi_1+\i \xi_2))^{-1}$. We record from \cite[eq. (1.6)]{Nachman1996} that
	\begin{equation}\label{gformula}
		g_k*f=-\frac1{4\i k}\big(\bpartial^{-1}f-(\partial+\i k)^{-1}\partial\bpartial^{-1}f\big).
	\end{equation}
	The Faddeev estimates that we need are those of Nachman \cite[Lemmas~1.2--1.3]{Nachman1996}:
	\begin{equation}
		\|g_k*f\|_{W^{s,\pt}(\C)}\leq C_{p,s}|k|^{s-1}\|f\|_{L^p(\C)}.
		\qquad 0\leq s\leq1,\quad |k|\geq1.\label{gestimate}
	\end{equation}
	We will also use
	\begin{equation}\label{gspatial}
		\bpartial(g_k*f)=-\frac14(\partial+\i k)^{-1} f,\qquad
		\partial(g_k*f)=-\i k(g_k*f)-\frac14\bpartial^{-1}f.
	\end{equation}
	These follow from \eqref{gformula} and are recorded at the end of the proof of \cite[Lemma~2.5, p.~84]{Nachman1996}. Finally, we need \cite[Lemma 2.5]{Nachman1996} but first we introduce some notation for weighted Sobolev spaces. For $\eta\in\R$, we write
	$$
	L^r_\eta(\C)
	=\{f:\langle z\rangle^\eta f\in L^r(\C)\},
	\qquad
	W^{1,r}_\eta(\C)
	=\{f:\langle z\rangle^\eta f\in W^{1,r}(\C)\},
	$$
	with the corresponding weighted norms, where
	$\langle z\rangle=(1+|z|^2)^{1/2}$.
	\begin{lemma}[{\cite[Lemma 2.5]{Nachman1996}}]\label{lem_Nachman_g_k}
		Let $\frac{2}{p'}<\alpha<1$ and $\beta>\frac{2}{\pt}$. The map $k \mapsto g_k*\cdot$ is
		differentiable on $\C \setminus \{0\}$ in the strong operator topology $L^p_\alpha(\C)$ to  $W^{1,\pt}_{-\beta}(\C)$ and
		\begin{equation}\label{gkderivative}
			\frac{\partial}{\partial\overline k}(g_k*f)(z)
			=-\frac{e_{-k}(z)}{4\pi\overline k}\int_\C e_k(w)f(w)\dA(w).
		\end{equation}
	\end{lemma}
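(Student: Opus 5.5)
The plan is to work from the explicit representation \eqref{gformula}, $g_k*f=-\frac{1}{4\i k}\bigl(\bpartial^{-1}f-(\partial+\i k)^{-1}\partial\bpartial^{-1}f\bigr)$, and to differentiate in $\overline k$. The first term $-\frac{1}{4\i k}\bpartial^{-1}f$ is holomorphic in $k$, so its $\overline k$-derivative vanishes. All of the $\overline k$-dependence therefore sits in the second term through $(\partial+\i k)^{-1}h=e_{-k}\partial^{-1}(e_kh)$ with $h=\partial\bpartial^{-1}f$. Since $\partial_{\overline k}e_k(w)=\i\overline w\,e_k(w)$, a formal computation gives
\[
\partial_{\overline k}\bigl[(\partial+\i k)^{-1}h\bigr](z)=\i\,e_{-k}(z)\,\partial^{-1}\bigl((\overline w-\overline z)e_k h\bigr)(z).
\]
The kernel of $\partial^{-1}$ is $\frac{1}{\pi(\overline z-\overline w)}$, so the factor $\overline w-\overline z$ cancels it. The result is $-\frac{\i}{\pi}e_{-k}(z)\int e_k h\,dA$, which is independent of $z$ apart from the phase $e_{-k}(z)$.

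Next I compute $\int e_k\,\partial\bpartial^{-1}f\,dA$. Integrating by parts and using $\partial e_k=\i k e_k$ gives $-\i k\int e_k\bpartial^{-1}f\,dA$. Applying $\bpartial e_k=\i\overline k e_k$ and the adjoint relation $\int e_k\bpartial^{-1}f=-\int(\bpartial^{-1}e_k)f$ (with $\bpartial^{-1}e_k=e_k/(\i\overline k)$ in the distributional sense) then yields $\int e_k\partial\bpartial^{-1}f\,dA=\frac{k}{\overline k}\int e_kf\,dA$. Combining this with the prefactor $\frac{1}{4\i k}$ produces exactly the right-hand side of \eqref{gkderivative}, namely $-\frac{e_{-k}}{4\pi\overline k}\int e_kf\,dA$.

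To make these steps rigorous, I would first carry out the computation for $f\in C_c^\infty(\C)$, using the Fourier multiplier representation. Writing $\hat f$ for the Fourier transform, $g_k*f$ has symbol $(|\xi|^2+2k(\xi_1+\i\xi_2))^{-1}$. This symbol is holomorphic in $k$ except on its zero set $\{\xi=0\}\cup\{\xi_1+\i\xi_2=-2\overline k\}$. Using $\partial_{\overline\zeta}(1/\zeta)=\pi\delta$, its $\overline k$-derivative is a delta measure concentrated at $\xi_1-\i\xi_2=-2k$. Evaluating $\hat f$ there gives $\int e_kf$ and the phase $e_{-k}(z)$, and this serves as a cross-check on the constant. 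I would then pass to $f\in L^p_\alpha$ by density. The condition $\alpha>2/p'$ ensures $L^p_\alpha\subset L^1$, since by Hölder $\int|f|\le\|\langle z\rangle^\alpha f\|_{L^p}\|\langle z\rangle^{-\alpha}\|_{L^{p'}}$, so the right-hand side of \eqref{gkderivative} is continuous on $L^p_\alpha$. The condition $\beta>2/\pt$ makes $e_{-k}\in W^{1,\pt}_{-\beta}$ with locally uniform bounds in $k$.

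For strong differentiability I would write the difference quotient as $\frac{1}{h}\bigl(g_{k+h}*f-g_k*f\bigr)=\int_0^1\bigl(\partial_kg+\partial_{\overline k}g\bigr)\,dt$ along the segment, valid for smooth $f$. The task is then to bound the derivative operators locally uniformly in $k$ from $L^p_\alpha$ to $W^{1,\pt}_{-\beta}$. The $\partial_{\overline k}$ part is rank one and harmless. The $\partial_k$ part requires differentiating $(\partial+\i k)^{-1}$ in $k$, which brings down a factor $w-z$. That factor is not cancelled by the kernel, so it has to be absorbed by the weights: $\langle w\rangle$ is absorbed by $\alpha$, and $\langle z\rangle$ by $\beta$. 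Obtaining this uniform weighted bound for $(z-w)$ times the conjugated Cauchy kernel, including the $W^{1}$ part via \eqref{gspatial}, is the step I expect to be the main obstacle. My first attempt would be to split $z-w=(z)-(w)$, treat $zw$-type weights with Hölder, and use \eqref{Tkbound} for the $W^{1,\pt}$ part. Once this bound is in place, the limit $h\to0$ follows by density together with continuity in $k$.
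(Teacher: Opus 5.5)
Your formal computation of $\partial_{\overline k}$ is correct, and the constant matches. The factor $\overline w-\overline z$ cancels the kernel of $\partial^{-1}$, and $\int e_k\,\partial\bpartial^{-1}f\dA=\frac{k}{\overline k}\int e_kf\dA$ is the Beurling multiplier evaluated at the frequency selected by $e_k$. The gap is the step you single out as the main obstacle, and the route you sketch for it cannot work.

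In \eqref{gformula} the $k$-dependent operator $(\partial+\i k)^{-1}$ acts on $h=\partial\bpartial^{-1}f$, not on $f$. Its $k$-derivative has kernel $\frac{\i(w-z)}{\pi(\overline z-\overline w)}e_k(w-z)$, which has constant modulus $1/\pi$. So there is nothing for the weights to absorb; the obstruction is the operand. Take $f\in C^\infty_c(\C)$ with $\int f\dA\neq0$. Then $h(z)=-\frac{1}{\pi z^2}\int f\dA+O(|z|^{-3})$, so $h\notin L^1$, and hence $h\notin L^p_\alpha$, since you noted yourself that $L^p_\alpha\subset L^1$. Every absolute-value bound on the differentiated term therefore diverges logarithmically. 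For instance, after your splitting of $z-w$, $\int|w|\,|h(w)|\,|z-w|^{-1}\dA(w)=\infty$ for every $z$. The $\partial_k$-derivative of this term exists only through oscillatory cancellation, which neither H\"older nor the weights capture. The same non-integrability means that even for smooth $f$ your $\overline k$-computation is an oscillatory integral, not an absolutely convergent one. So the Fourier/Cauchy--Pompeiu verification is a necessary step there, not merely a cross-check.

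The remedy is to let the $k$-dependent operators act on $f$ itself. The paper takes this lemma from Nachman, whose proof ends with \eqref{gspatial}. For the spatial derivatives use those identities: $\bpartial(g_k*f)=-\frac14(\partial+\i k)^{-1}f$ and $\partial(g_k*f)=-\i k(g_k*f)-\frac14\bpartial^{-1}f$. Both $k$-derivatives of $(\partial+\i k)^{-1}f$ have kernels of modulus $1/\pi$ acting on $f\in L^1$. They therefore map $L^p_\alpha$ into $L^\infty\subset L^{\pt}_{-\beta}$, and dominated convergence gives the $o(|h|)$ remainder. This is exactly where $\alpha>2/p'$ and $\beta>2/\pt$ enter.

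For $g_k*f$ itself, use the scaling identity $g_k(x)=g_1(kx)$, which is immediate from the multiplier. By \eqref{gspatial} with $f=\delta$, $\bpartial g_1(y)=-e_{-1}(y)/(4\pi\overline y)$ and $\partial g_1(y)=-\i g_1(y)-1/(4\pi y)$. Hence $\partial_{\overline k}g_k(x)=\overline x(\bpartial g_1)(kx)=-\frac{e_{-k}(x)}{4\pi\overline k}$, which is \eqref{gkderivative} in one line. Likewise $\partial_kg_k(x)=x(\partial g_1)(kx)=-\i x\,g_k(x)-\frac1{4\pi k}$. This is bounded by $C/|k|$ thanks to the standard kernel bounds $|g_1(y)|\lesssim1+|\log|y||$ for $|y|\le1$ and $|g_1(y)|\lesssim|y|^{-1}$ for $|y|\ge1$. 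So $\partial_k(g_k*f)\in L^\infty$ whenever $f\in L^1$, and \eqref{gspatial} then handles its first derivatives.

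Finally, $k\mapsto g_k*f$ is only real-differentiable. Your segment formula should read $g_{k+h}*f-g_k*f=\int_0^1\bigl(h\,\partial_k+\overline h\,\partial_{\overline k}\bigr)(g_{k+th}*f)\,dt$. A complex difference quotient $\frac1h(\cdot)$ does not converge.
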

	\subsection{Large-frequency complex geometric optics}
	We assume throughout that $q\in L^p(\C)$, for some $p\in (1,2)$ and it is supported in $\overline\Omega$.
	\begin{lemma}\label{lem_cgo}
		There is $k_0\geq1$ such that, for any $k\in \C$ with $|k| >k_0$, there exists a unique solution to the equation 
		$$
		(-\Delta +q) \psi_q(\cdot,k) =0 \quad \text{in $\mathcal D'(\C)$}
		$$
		with $e^{-\i kz}\psi_q(\cdot,k)-1 \in L^{\pt}\cap L^\infty$. For each fixed $k$ with $|k|>k_0$, the function 
		$$
		\mu_q(z,k):= e^{-\i kz}\psi_q(z,k) \quad z\in \C
		$$
		satisfies the bounds
		\begin{align}
			\|\mu_q(\cdot,k)-1\|_{L^\infty(\C)}&\leq C|k|^{\sigma-1},\label{muinfty}\\
			\|\mu_q(\cdot,k)-1\|_{W^{1,\pt}(\C)}
			+|k|\|\mu_q(\cdot,k)-1\|_{L^{\pt}(\C)}&\leq C,\label{muSobolev}
		\end{align}
		for some $C>1$ that is independent of $k$.
	\end{lemma}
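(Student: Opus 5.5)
Writing $\psi_q=e^{\i kz}\mu_q$ and using \eqref{conj_Laplace}, the equation $(-\Delta+q)\psi_q=0$ becomes $P_k\mu_q=-q\mu_q$. We look for $\mu_q=1+v$, so that
$$
P_k v=-q-qv .
$$
Using the Faddeev solution operator from \eqref{eq_conj_Laplace}, we recast this as the fixed-point problem
$$
v=-g_k*q-g_k*(qv).
$$
We solve it by contraction in the Banach space
$$
X=L^{\pt}(\C)\cap L^\infty(\C),
\qquad \|v\|_X=\|v\|_{L^{\pt}}+\|v\|_{L^\infty},
$$
or more conveniently in $W^{1,\pt}(\C)$. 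The point is that $W^{1,\pt}$ embeds into $L^\infty$, because $\pt>2$.

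\emph{Contraction.} For $v\in L^\infty$ we have $qv\in L^p$ and $\|qv\|_{L^p}\le\|q\|_{L^p}\|v\|_{L^\infty}$. By \eqref{gestimate} with $s=\sigma$, and the Sobolev embedding $W^{\sigma,\pt}\hookrightarrow L^\infty$ (valid since $\sigma\pt>2$ by \eqref{exponents}), we get
$$
\|g_k*(qv)\|_{L^\infty}\le C\|g_k*(qv)\|_{W^{\sigma,\pt}}\le C|k|^{\sigma-1}\|q\|_{L^p}\|v\|_{L^\infty}.
$$
With $s=0$, \eqref{gestimate} also gives $\|g_k*(qv)\|_{L^{\pt}}\le C|k|^{-1}\|q\|_{L^p}\|v\|_{L^\infty}$. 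Hence the map $v\mapsto -g_k*(qv)$ is a contraction on $X$ with constant at most $C|k|^{\sigma-1}\|q\|_{L^p}$. This is less than $1/2$ once $|k|>k_0$, with $k_0$ chosen large. The forcing term $-g_k*q$ lies in $X$, with
$$
\|g_k*q\|_{L^\infty}\le C|k|^{\sigma-1},\qquad \|g_k*q\|_{L^{\pt}}\le C|k|^{-1}.
$$
The Banach fixed point theorem then gives a unique $v\in X$, and $\|v\|_X\le 2\|g_k*q\|_X$. This yields \eqref{muinfty}, and also the $L^{\pt}$ part of \eqref{muSobolev}: $|k|\|v\|_{L^{\pt}}\le C$.

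\emph{Sobolev bound.} Apply \eqref{gestimate} with $s=1$ to $v=-g_k*(q(1+v))$:
$$
\|v\|_{W^{1,\pt}}\le C\|q\|_{L^p}(1+\|v\|_{L^\infty})\le C.
$$
This completes \eqref{muSobolev}. Since $g_k*f\in W^{2,p}_{\loc}$ solves $P_k(g_k*f)=f$, the function $\psi_q=e^{\i kz}(1+v)$ solves $(-\Delta+q)\psi_q=0$ in $\mathcal D'(\C)$.

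\emph{Uniqueness.} This is the step needing care. Suppose $\tilde\psi$ is another distributional solution with $\tilde v=e^{-\i kz}\tilde\psi-1\in L^{\pt}\cap L^\infty$. Then $w=v-\tilde v\in L^{\pt}\cap L^\infty$ satisfies $P_kw=-qw$ in $\mathcal D'$, and $qw\in L^p$. By the uniqueness part of \cite[Lemma 1.3]{Nachman1996}, the solution of $P_ku=f$ with $u$ in the relevant class (we use $w\in L^{\pt}$; the Fourier symbol of $P_k$ vanishes only on a null set of $\xi$) is unique and equals $g_k*f$. Thus $w=-g_k*(qw)$, and the contraction estimate gives $\|w\|_X\le\tfrac12\|w\|_X$, so $w=0$.

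The main obstacle is justifying that uniqueness class precisely. We must check that a tempered $L^{\pt}$ solution of $P_ku=0$ vanishes. Its Fourier transform is supported in the zero set of $|\xi|^2+2k(\xi_1+\i\xi_2)$, which is at most two points. So $u$ would be a finite sum of polynomial-times-exponential terms, none of which lies in $L^{\pt}$ unless it is zero.
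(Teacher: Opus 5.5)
Your argument is correct and is essentially the paper's proof. The paper gets the contraction the same way, from \eqref{gestimate} with $s=\sigma$ and $W^{\sigma,\pt}\hookrightarrow L^\infty$, though it inverts $I+\mathcal T_k$ on $L^\infty$ for $\mu_q$ rather than contracting for $v$ in $L^{\pt}\cap L^\infty$; it obtains \eqref{muinfty}--\eqref{muSobolev} by reapplying \eqref{gestimate} with $s=\sigma,0,1$ to the integral equation, and proves uniqueness exactly as you do through the $L^{\pt}$ uniqueness in \cite[Lemma~1.3]{Nachman1996}, which your two-point Fourier-support observation justifies directly.

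One small correction: $|k|\|v\|_{L^{\pt}}\le C$ does not follow from $\|v\|_X\le 2\|g_k*q\|_X$, which only gives $\|v\|_{L^{\pt}}=O(|k|^{\sigma-1})$. It follows instead from \eqref{gestimate} with $s=0$ applied to $v=-g_k*(q(1+v))$, just as in your Sobolev step. Likewise, your aside about contracting in plain $W^{1,\pt}$ would fail, since the $s=1$ bound has no decay in $k$, but you never use it.
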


	\begin{proof}
		For $|k|\geq1$, define $\mathcal T_kv=g_k*(qv)$. Recalling \eqref{exponents}, it follows from
		\eqref{gestimate} and the embedding
		$W^{\sigma,\pt}(\C)\hookrightarrow L^\infty(\C)$, that 
		$$
		\|\mathcal T_k\|_{L^\infty(\C)\to L^\infty(\C)}
		\leq C_0|k|^{\sigma-1}\|q\|_{L^p(\C)},
		$$
		where $C_0$ depends only on $p$ and $\sigma$.
		Choose $k_0\geq1$ sufficiently large that
		\begin{equation}\label{def_k_0}
			C_0k_0^{\sigma-1}\|q\|_{L^p(\C)}\leq\frac12.
		\end{equation}
		For $|k|>k_0$, the operator $I+\mathcal T_k$ is therefore
		invertible on $L^\infty(\C)$, with inverse norm at most $2$.
		Consequently, the equation
		\begin{equation}\label{LS}
			\mu_q(\cdot,k)
			=1-g_k*\bigl(q\mu_q(\cdot,k)\bigr)
		\end{equation}
		has a unique bounded solution, satisfying
		$\|\mu_q(\cdot,k)\|_{L^\infty(\C)}\leq2$.
		
		Applying \eqref{gestimate} to \eqref{LS} gives
		$$
		\|\mu_q(\cdot,k)-1\|_{W^{s,\pt}(\C)}
		\leq 2C_{p,s}|k|^{s-1}\|q\|_{L^p(\C)},
		\qquad 0\leq s\leq1.
		$$
		In particular, $\mu_q(\cdot,k)-1\in W^{1,\pt}(\C)$.
		Moreover, \eqref{LS} implies
		$P_k(\mu_q-1)=-q\mu_q$, so \eqref{conj_Laplace} shows that
		$\psi_q(z,k)=e^{\i kz}\mu_q(z,k)$ solves
		$(-\Delta+q)\psi_q=0$ distributionally.
		The choices $s=\sigma,0,1$, together with the Sobolev embedding,
		yield \eqref{muinfty} and \eqref{muSobolev}.
		
		Finally, let $\psi$ be any solution in the stated class and put
		$\mu=e^{-\i kz}\psi$. Then
		$$
		P_k(\mu-1)=-q\mu,
		\qquad
		\mu-1\in L^{\pt}(\C),
		\qquad
		q\mu\in L^p(\C).
		$$
		Uniqueness for the free equation in $L^{\pt}(\C)$
		\cite[Lemma~1.3]{Nachman1996} gives
		$\mu-1=-g_k*(q\mu)$. Thus
		$(I+\mathcal T_k)\mu=1$, and invertibility on $L^\infty(\C)$
		implies $\mu=\mu_q$. This proves uniqueness.
	\end{proof}

	\begin{lemma}\label{lem_kregularity}
		Let $B$ be as in \eqref{def_ball}. The map
		$$
		k\longmapsto\mu_q(\cdot,k)|_{B}
		$$
		is continuously differentiable from $\{|k|>k_0\}\subset\C$ to $W^{1,\pt}(B)$.
	\end{lemma}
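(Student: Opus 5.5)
We will prove Lemma~\ref{lem_kregularity} by differentiating the Lippmann--Schwinger equation \eqref{LS} in $k$. We cannot rely on the $L^\infty$ contraction alone, because Lemma~\ref{lem_Nachman_g_k} gives differentiability of $k\mapsto g_k*\cdot$ only from the weighted space $L^p_\alpha(\C)$ into $W^{1,\pt}_{-\beta}(\C)$. We therefore recast \eqref{LS} in a space adapted to these weights. Write $\nu(\cdot,k)=\mu_q(\cdot,k)-1$, so that
$$
\nu+g_k*(q\nu)=-g_k*q .
$$
Since $q$ is supported in $\overline\Omega$, multiplication by $q$ maps $W^{1,\pt}_{-\beta}(\C)$ boundedly into $L^p_\alpha(\C)$ for every $\alpha$. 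Indeed, on the compact set $\overline\Omega$ the weights are harmless, and $W^{1,\pt}\hookrightarrow L^\infty$ locally, so $\|qv\|_{L^p_\alpha}\leq C\|q\|_{L^p}\|v\|_{L^\infty(\Omega)}\leq C\|v\|_{W^{1,\pt}_{-\beta}}$. Hence $\mathcal T_k v=g_k*(qv)$ is a bounded operator on $X=W^{1,\pt}_{-\beta}(\C)$. Lemma~\ref{lem_Nachman_g_k} makes $k\mapsto\mathcal T_k$ strongly differentiable on $X$, and the right-hand side $k\mapsto -g_k*q\in X$ is differentiable, because $q\in L^p_\alpha$.

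The first key step is to show that $I+\mathcal T_k$ is invertible on $X$ for $|k|>k_0$, with a bound that is locally uniform in $k$. For injectivity, let $v\in X$ satisfy $v=-g_k*(qv)$. Then $qv\in L^p(\C)$, and \eqref{gestimate} together with Sobolev embedding gives $v\in L^\infty(\C)$. The $L^\infty$ invertibility from the proof of Lemma~\ref{lem_cgo} then forces $v=0$. For surjectivity, given $f\in X$, we solve $w+\mathcal T_k w=\mathcal T_k f$ in $L^\infty$; this is possible since $\mathcal T_kf\in L^\infty$. We then set $v=f-w$. One checks that $w=\mathcal T_k(f-w)\in W^{1,\pt}(\C)\subset X$, so $v\in X$ solves $(I+\mathcal T_k)v=f$. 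The bounds are explicit: $\|w\|_{L^\infty}\leq 2\|\mathcal T_kf\|_{L^\infty}\leq C\|f\|_X$, and then $\|w\|_{W^{1,\pt}}\leq C\|q(f-w)\|_{L^p}$. This gives $\|(I+\mathcal T_k)^{-1}\|_{X\to X}\leq C$ uniformly on compact subsets of $\{|k|>k_0\}$.

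With invertibility in hand, the second step is differentiability, which we obtain from the resolvent identity. Because $q$ has compact support, we can go further than strong differentiability: composing the strong derivative with multiplication by $q$ yields norm differentiability of $\mathcal T_k$ on $X$. To see this, we aim to use the explicit formula \eqref{gkderivative}. It shows that $\partial_{\bar k}\mathcal T_k$ is the rank-one operator
$$
v\mapsto -\frac{e_{-k}}{4\pi\bar k}\int e_k qv\dA ,
$$
whose norm is manifestly continuous in $k$. The $\partial_k$ derivative we would handle via the same Nachman argument, applied to the conjugate symbol, or by differentiating \eqref{gformula} directly in $k$. In the latter case the terms are $k^{-1}$ times Cauchy-type operators, and $\partial_k$ of $e_{\pm k}$ produces factors $\i z$ and $\i\bar z$, which the weights absorb. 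The resolvent identity then gives that $k\mapsto\nu(\cdot,k)\in X$ is $C^1$, with
$$
\partial\nu=-(I+\mathcal T_k)^{-1}\bigl[(\partial \mathcal T_k)\nu+\partial(g_k*q)\bigr].
$$
Restricting to $B$ finally gives $C^1$ into $W^{1,\pt}(B)$, since on the bounded set $B$ the weighted and unweighted $W^{1,\pt}$ norms are equivalent.

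The main obstacle is continuity of the derivative. Lemma~\ref{lem_Nachman_g_k} gives only differentiability, not continuity of $k\mapsto \partial_k(g_k*\cdot)$ in operator norm. If the direct computation above proves messy, we would fall back on a different route. First, prove that $k\mapsto \mathcal T_k$ is norm-continuous on $X$, which follows from \eqref{gformula} and dominated convergence applied to the compactly supported input $qv$. Then observe that the derivative formula consists of continuous compositions. This requires continuity of the strong derivatives applied to the fixed family $\nu(\cdot,k)$, and we would verify that using \eqref{gkderivative} and its $\partial_k$ analogue.
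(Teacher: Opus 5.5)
\textbf{There is a genuine gap, in the ``continuously'' part of the lemma.}

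Most of your setup is sound. $\mathcal T_k$ is bounded and strongly differentiable on $X=W^{1,\pt}_{-\beta}(\C)$. Your direct proof that $I+\mathcal T_k$ is invertible on $X$, with locally uniform bounds, is correct. The paper instead works on $W^{1,\pt}(B)$, where \eqref{LS} already closes since $\supp q\subset\overline\Omega\subset B$, and uses injectivity plus the Fredholm alternative. The resolvent identity also gives differentiability of $\nu$ once $\mathcal T_k$ is norm continuous.

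To make $k\mapsto-(I+\mathcal T_k)^{-1}\bigl[(\partial_k\mathcal T_k)\nu+\partial_k(g_k*q)\bigr]$ continuous, you need the $\partial_k$-derivative of $k\mapsto g_k*\cdot$ to depend continuously on $k$, at least strongly and with local bounds. You never establish this. Lemma~\ref{lem_Nachman_g_k} gives only differentiability and the rank-one formula \eqref{gkderivative} for $\partial_{\bar k}$. So your $\partial_{\bar k}$ half is fine, but the ``$\partial_k$ analogue'' your fallback relies on is exactly what is missing.

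Your direct differentiation of \eqref{gformula} fails as described. The factor $\i w$ produced by $\partial_k e_k$ sits inside $\partial^{-1}$, acting on $e_k\,\partial\bpartial^{-1}(qv)$. The Beurling transform of the compactly supported $qv$ is not compactly supported; it decays only like $|w|^{-2}$. Hence $w\,\partial\bpartial^{-1}(qv)$ is in general not in $L^p(\C)$ for $p<2$, and its Cauchy transform does not converge absolutely. No weight can absorb this factor: the weight of $L^p_\alpha$ acts on $qv$, not on its Beurling transform, and the weight of $X$ (for $\beta\geq1$) handles only the factor coming from $e_{-k}$.

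\textbf{How the paper handles this.} It takes from Nachman's proof of Lemma~\ref{lem_Nachman_g_k} that $k\mapsto G_B(k)$ is strongly $C^1$ from $L^p(\Omega)$ into $W^{1,\pt}(B)$. It then uses compactness of $M_q$, which comes from the compact embedding $W^{1,\pt}(B)\hookrightarrow C(\overline B)$, to upgrade this to $C^1$ dependence of $K_q(k)$ in operator norm. It is this compactness, not the compact support of $q$ alone, that turns strong regularity into norm regularity. The same remark applies to your norm-continuity claim for $\mathcal T_k$: dominated convergence alone gives only strong continuity.

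\textbf{A self-contained fix within your framework.} Differentiate the kernel symmetry \eqref{kernel_symmetry} in $\bar k$ for $f\in L^p$ supported in $\overline\Omega$. Using \eqref{gkderivative} and \eqref{kernel_symmetry} once more gives $\partial_k(g_k*f)=-\frac{1}{4\pi k}\int_\C f\dA+\i\,g_k*(wf)-\i z\,(g_k*f)$, where $wf$ denotes $w\mapsto wf(w)$. Equivalently, $\partial_k g_k(z)=-\i z\,g_k(z)-\frac{1}{4\pi k}$. The right-hand side is visibly strongly continuous and locally bounded in $k$ as a map from $L^p(\Omega)$ into $W^{1,\pt}(B)$, where the weights play no role. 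With this identity, your resolvent argument does produce a continuous derivative.
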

	
	\begin{proof}
		Set $X=W^{1,\pt}(B)$ and define an operator on $X$ as follows,
		$$
		K_q(k):=G_B(k)M_q,
		$$
		where
		$$
		M_qv=q(v|_\Omega),
		\qquad
		G_B(k)f=(g_k*\widetilde f)|_B,
		$$
		and $\widetilde f$ denotes extension by zero to $\C$. Nachman's Lemma~2.5 and its proof
		\cite{Nachman1996} show that
		$$
		k\longmapsto G_B(k)
		$$
		depends strongly $C^1$ on $k\ne0$ as an operator from
		$L^p(\Omega)$ to $X$. Since $\pt>2$, the embedding
		$X\hookrightarrow C(\overline B)$ is compact, and hence
		$M_q:X\to L^p(\Omega)$ is compact. Composition on the right with
		a compact operator upgrades strong $C^1$ dependence to $C^1$
		dependence in operator norm; this is also the argument used in
		\cite[proof of Theorem~2.1]{Nachman1996}. Consequently,
		$K_q(k)$ is compact on $X$ and depends $C^1$ on $k$ in operator
		norm.
		
		For $|k|>k_0$, the contraction estimate in the proof of
		Lemma~\ref{lem_cgo} shows that $I+K_q(k)$ is injective on $X$.
		It is therefore invertible by the Fredholm alternative. Since
		$$
		\mu_q(\cdot,k)|_B=(I+K_q(k))^{-1}1,
		$$
		the conclusion follows from the smoothness of inversion on the
		bounded invertible operators on $X$.
	\end{proof}
	
	\begin{remark}\label{remark_mu_point}
		In particular, evaluation at any $z\in\overline{B}$ is a continuous linear functional on the space in Lemma~\ref{lem_kregularity}. Thus $\mu_q(z,k)$ is continuously differentiable in $k$ for each such $z$.\end{remark}
	
	Next, we record the well-known large-frequency d-bar equation satisfied by the CGO solutions above. Before stating the lemma, we recall the notion of the {\em scattering} transform of $q$ as follows,
	\begin{equation}\label{tdef}
		t_q(k)=\int_\C e_k(z)q(z)\mu_q(z,k)\dA(z) \qquad |k|>k_0.
	\end{equation}
	
	\begin{lemma}\label{lem_complex_Dbar}
		For $|k|>k_0$,
		\begin{equation}\label{complex_Dbar}
			\frac{\partial\mu_q}{\partial\overline k}(z,k)
			=\frac{t_q(k)}{4\pi\overline k}\,e_{-k}(z)\,
			\overline{\mu_{\overline q}(z,k)},
		\end{equation}
		The identity holds in $W^{1,\pt}(B)$ and hence also pointwise for every $z\in \overline{B}$.
	\end{lemma}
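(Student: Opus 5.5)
Differentiate the Lippmann--Schwinger equation \eqref{LS} in $\overline k$, following Nachman's proof of \cite[Theorem~2.1]{Nachman1996}. By Lemma~\ref{lem_kregularity}, $k\mapsto\mu_q(\cdot,k)|_B$ is $C^1$ into $W^{1,\pt}(B)$. Since $q$ is supported in $\overline\Omega\subset B$, the term $g_k*(q\mu_q)$ depends only on $\mu_q|_B$, so the product rule applies. Together with Lemma~\ref{lem_Nachman_g_k} (with $f=q\mu_q$, which is compactly supported and hence lies in $L^p_\alpha$), this gives in $W^{1,\pt}(B)$
\begin{equation*}
\Bigl(I+\mathcal T_k\Bigr)\frac{\partial\mu_q}{\partial\overline k}
=\frac{e_{-k}}{4\pi\overline k}\int_\C e_k(w)q(w)\mu_q(w,k)\dA(w)
=\frac{t_q(k)}{4\pi\overline k}\,e_{-k}.
\end{equation*}
One point needs care. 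Lemma~\ref{lem_Nachman_g_k} gives the derivative in $W^{1,\pt}_{-\beta}(\C)$, and we need the derivative of $k\mapsto g_k*(q\mu_q(\cdot,k))$ when both $k$ and $\mu_q(\cdot,k)$ vary. For this, split the difference quotient as $(g_{k+h}-g_k)*(q\mu_q(k+h))+g_k*(q(\mu_q(k+h)-\mu_q(k)))$. Then use the operator-norm $C^1$ dependence of $K_q(k)$ (established in the proof of Lemma~\ref{lem_kregularity}) together with the continuity of $k\mapsto\mu_q(\cdot,k)|_B$. Hence $\partial_{\overline k}\mu_q$, restricted to $B$, solves $(I+K_q(k))v=\frac{t_q(k)}{4\pi\overline k}e_{-k}|_B$ in $X=W^{1,\pt}(B)$.

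Next, show that the proposed right-hand side $v:=\frac{t_q(k)}{4\pi\overline k}e_{-k}\overline{\mu_{\overline q}(\cdot,k)}$ solves the same equation; uniqueness then follows from invertibility of $I+K_q(k)$ on $X$. It suffices to prove
\begin{equation*}
e_{-k}\overline{\mu_{\overline q}}+g_k*\bigl(q\,e_{-k}\overline{\mu_{\overline q}}\bigr)=e_{-k}.
\end{equation*}
Conjugating \eqref{LS} for $\overline q$ gives $\overline{\mu_{\overline q}}=1-\overline{g_k*(\overline q\mu_{\overline q})}$. So the identity reduces to the kernel symmetry
\begin{equation*}
e_{-k}\,\overline{g_k*\overline f}=g_k*(e_{-k}f)\qquad (f\in L^p).
\end{equation*}
This is verified on the Fourier side. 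The multiplier of $g_k$ is $m_k(\xi)=(|\xi|^2+2k(\xi_1+\i\xi_2))^{-1}$. Conjugation sends a multiplier $m(\xi)$ to $\overline{m(-\xi)}$, and multiplication by $e_{-k}$ shifts frequencies by the real vector $-2(\Re k,-\Im k)$, up to sign conventions. A direct computation shows $|\xi|^2+2k(\xi_1+\i\xi_2)$ is invariant under the combined operation; this is Nachman's identity $\overline{g_k(z)}=e_{-k}(z)g_k(z)$, more precisely $g_k(z)=e_{-k}(z)\overline{g_k(z)}$ up to the convention for $e_k$. Equivalently, one can check with \eqref{conj_Laplace} that $e_{-k}\overline{u}$ solves $P_k(e_{-k}\overline u)=e_{-k}\overline{P_ku}$ whenever $u=g_k*\overline f$, since $\overline{P_k}$ conjugated by $e_{-k}$ equals $P_k$. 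The uniqueness part of \cite[Lemma~1.3]{Nachman1996} in $L^{\pt}$ then gives the symmetry without any Fourier bookkeeping. I would use this second route.

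Finally, since $\mu_{\overline q}(\cdot,k)|_B\in X$ and $e_{-k}$ is smooth, $v\in X$, and uniqueness gives $\partial_{\overline k}\mu_q=v$ in $W^{1,\pt}(B)$. The pointwise statement on $\overline B$ follows from the embedding $X\hookrightarrow C(\overline B)$, as in Remark~\ref{remark_mu_point}. I expect the main obstacle to be the first step: justifying that the $\overline k$-derivative can pass through $g_k*(q\mu_q)$ in the right topology, given that Lemma~\ref{lem_Nachman_g_k} is only a strong-operator statement in weighted spaces. The symmetry identity is routine once $P_k$ is conjugated correctly.
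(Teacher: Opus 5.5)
Your proposal is correct and follows the paper's proof essentially step for step. You differentiate \eqref{LS} to get $(I+K_q(k))\partial_{\overline k}\mu_q=\frac{t_q(k)}{4\pi\overline k}e_{-k}$ in $W^{1,\pt}(B)$, show that $e_{-k}\overline{\mu_{\overline q}}$ solves the same equation with right-hand side $e_{-k}$ via the kernel symmetry (from $P_k(e_{-k}\overline v)=e_{-k}\overline{P_kv}$ and uniqueness in Nachman's Lemma~1.3), and conclude by invertibility of $I+K_q(k)$. Your split of the difference quotient simply spells out what the paper compresses into ``justified by Lemma~\ref{lem_kregularity} and \eqref{gkderivative}.''
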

	
	\begin{proof}
		The choice \eqref{def_k_0} applies equally to $\overline q$.
		With $K_q(k)$ as in the preceding proof and $t_q$ as in
		\eqref{tdef}, differentiation of \eqref{LS}, justified by
		Lemma~\ref{lem_kregularity} and \eqref{gkderivative}, yields
		\begin{equation}\label{differentiatedLS}
			(I+K_q(k))\frac{\partial\mu_q}{\partial\overline k}
			=\frac{t_q(k)}{4\pi\overline k}e_{-k}
			\quad\text{in }W^{1,\pt}(B).
		\end{equation}
		For $f\in L^p(\C)$, the free Green operator satisfies
		\begin{equation}\label{kernel_symmetry}
			g_k*(e_{-k}\overline f)=e_{-k}\overline{g_k*f}.
		\end{equation}
		Indeed, $P_k(e_{-k}\overline v)=e_{-k}\overline{P_kv}$
		in distributions; applying this to $v=g_k*f$ and using
		uniqueness in \cite[Lemma 1.3]{Nachman1996} proves \eqref{kernel_symmetry}.
		Conjugating \eqref{LS} for $\overline q$ and using
		\eqref{kernel_symmetry} with $f=\overline q\mu_{\overline q}$
		therefore gives
		$$
		(I+K_q(k))\bigl(e_{-k}\overline{\mu_{\overline q}}\bigr)
		=e_{-k}
		\quad\text{in $W^{1,\pt}(B)$}.
		$$
		Invertibility of $I+K_q(k)$ in \eqref{differentiatedLS}
		proves \eqref{complex_Dbar}. 
	\end{proof}

	We will need the following large frequency asymptotic lemma. 
	\begin{lemma}\label{lem_firstcoef}
		Let us define the primitive $A_q=\frac14\bpartial^{-1}q$. Then
		\begin{equation}\label{firstcoef}
			k\big(\mu_q(\cdot,k)-1\big)\longrightarrow-\i A_q
			\quad\text{in }L^{\pt}(\C),\qquad \text{as $|k|\longrightarrow\infty$}.
		\end{equation}
	\end{lemma}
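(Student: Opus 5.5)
Since $\mu_q$ solves \eqref{LS}, we have $k(\mu_q-1) = -k\,g_k*(q\mu_q)$. I will split this as
$$
-k\,g_k*(q\mu_q) = -k\,g_k*q - k\,g_k*\bigl(q(\mu_q-1)\bigr).
$$

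The second term is an error term. By \eqref{gestimate} with $s=0$ and then \eqref{muinfty},
$$
\|k\,g_k*(q(\mu_q-1))\|_{L^{\pt}} \le C\|q\|_{L^p}\|\mu_q-1\|_{L^\infty} \le C|k|^{\sigma-1}\|q\|_{L^p},
$$
and this tends to zero because $\sigma<1$.

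For the main term, \eqref{gformula} gives
$$
-k\,g_k*q = \frac{1}{4\i}\Bigl(\bpartial^{-1}q - (\partial+\i k)^{-1}\partial\bpartial^{-1}q\Bigr).
$$
The first piece equals $\frac{1}{4\i}\bpartial^{-1}q = -\i A_q$, which is exactly the claimed limit. So it remains to show that
$$
(\partial+\i k)^{-1}h \to 0 \quad\text{in } L^{\pt}(\C) \text{ as } |k|\to\infty,
$$
where $h=\partial\bpartial^{-1}q\in L^p(\C)$ by boundedness of the Beurling transform.

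This decay is the main obstacle. The bound \eqref{Tkbound} only gives uniform boundedness in $k$, not decay. I would handle it by density. The operators $(\partial+\i k)^{-1}$ are uniformly bounded from $L^p$ to $L^{\pt}$, so it suffices to prove the decay for $h\in C_c^\infty(\C)$. For such $h$, I would write
$$
(\partial+\i k)^{-1}h = \frac{1}{\i k}\Bigl(h - (\partial+\i k)^{-1}\partial h\Bigr),
$$
which holds because $(\partial+\i k)\frac{1}{\i k}(h - (\partial+\i k)^{-1}\partial h) = h$. To use this, one must confirm the identity within the uniqueness class: both sides lie in $L^{\pt}$, and the $\partial$-type operator $e_{-k}\partial^{-1}e_k$ has trivial kernel there, since $e_k f$ would be antiholomorphic and in $L^{\pt}$, hence zero. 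Alternatively, one can check the identity directly: $(\partial+\i k)^{-1}\partial h = e_{-k}\partial^{-1}(e_k\partial h)$, and since $e_k\partial h = \partial(e_k h) - \i k e_k h$, we get $e_{-k}\partial^{-1}(e_k\partial h) = h - \i k(\partial+\i k)^{-1}h$.

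Given the identity, \eqref{Tkbound} bounds the right-hand side by $C|k|^{-1}\bigl(\|h\|_{L^{\pt}}+\|\partial h\|_{L^p}\bigr)\to 0$.

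For general $h$, I choose $h_\epsilon\in C_c^\infty$ with $\|h-h_\epsilon\|_{L^p}<\epsilon$. Then
$$
\limsup_{|k|\to\infty}\|(\partial+\i k)^{-1}h\|_{L^{\pt}} \le C_p\epsilon,
$$
and letting $\epsilon\to 0$ gives the decay. Combining the main term and the error term yields \eqref{firstcoef}.
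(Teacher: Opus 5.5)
Your proposal is correct and follows essentially the same route as the paper. Both arguments insert \eqref{gformula} into \eqref{LS}, control the $q(\mu_q-1)$ term via \eqref{gestimate} and \eqref{muinfty}, and prove $(\partial+\i k)^{-1}h\to0$ in $L^{\pt}$ through the identity $(\partial+\i k)^{-1}h=\frac{1}{\i k}\bigl(h-(\partial+\i k)^{-1}\partial h\bigr)$ for $h\in C^\infty_c$, followed by density and the uniform bound \eqref{Tkbound}.
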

	
	\begin{proof}
		We claim that
		\begin{equation}\label{Tkvanishes}
			(\partial+\i k)^{-1} f\longrightarrow0\quad\text{in }L^{\pt}(\C)
			\quad\text{for every }f\in L^p(\C).
		\end{equation}
		Indeed, for $f\in C^\infty_c(\C)$, the inverse identity for $\partial+\i k$ gives
		$$
		(\partial+\i k)^{-1} f=\frac1{\i k}\big(f-(\partial+\i k)^{-1}\partial f\big),
		$$
		and thus \eqref{Tkbound} proves \eqref{Tkvanishes} for smooth compactly supported functions. Density and the uniform bound \eqref{Tkbound} extend the conclusion to $f\in L^p(\C)$ as claimed. Therefore \eqref{gformula} implies
		$$
		k(g_k*f)\longrightarrow\frac{\i }{4}\bpartial^{-1}f
		\quad\text{in }L^{\pt}(\C) \quad \text{as $|k|\to \infty$}
		$$
		for each fixed $f\in L^p(\C)$. On the other hand, \eqref{muinfty} gives
		$$
		\|q(\mu_q(\cdot,k)-1)\|_{L^p}\longrightarrow0  \quad \text{as $|k|\to \infty$},
		$$
		and note also that $k\,g_k*$ is uniformly bounded from $L^p$ to $L^{\pt}$ by \eqref{gestimate}. Substitution in \eqref{LS} proves \eqref{firstcoef}.
	\end{proof}

	\subsection{Boundary determination at large complex frequencies}
	
	We continue to assume that $q\in L^p(\C)$ is supported in $\overline\Omega$. Following the notation in \cite{Nachman1996}, let $S_k$ denote the operator
	$$
	S_kf(x)= \int_{\partial\Omega} G_k(x-y)f(y)\,d\sigma(y),
	$$
	the single-layer operator corresponding to the Green function $G_k=e^{\i kz}g_k$ that can also be defined distributionally as
	$$
	G_k(x)= \frac{e^{\i k(x^1+\i x^2)}}{(2\pi)^2} \int_{\R^2} \frac{e^{\i \xi\cdot x}}{|\xi|^2+2k(\xi_1+\i\xi_2)}\,d\xi.
	$$
	It is bounded from $H^{-1/2}(\partial\Omega)$ to $H^{1/2}(\partial\Omega)$; see \cite[Lemma~7.1]{Nachman1996}.
	
	\begin{lemma}\label{lem_boundary}
		Suppose that zero is not a Dirichlet eigenvalue for $-\Delta+q$ on $\Omega$. For $|k|>k_0$, the trace $h_q=\psi_q|_{\partial\Omega}$ is the unique solution in $H^{1/2}(\partial\Omega)$ of
		\begin{equation}\label{boundaryintegral}
			\big[I+S_k(\Lambda_q-\Lambda_0)\big]h_q=e^{\i kz}|_{\partial\Omega}.
		\end{equation}
		Furthermore,
		\begin{align}
			\psi_q(z,k)&=e^{\i kz}-S_k\big((\Lambda_q-\Lambda_0)h_q\big)(z),
			\quad z\in\C\setminus\overline\Omega,\label{exteriorpsi}\\
			t_q(k)&=\big\langle(\Lambda_q-\Lambda_0)h_q,e^{\i \overline k\overline z}|_{\partial\Omega}\big\rangle.\label{boundaryt}
		\end{align}
		Consequently, the knowledge of the DN map associated to a potential $q\in L^p(\Omega)$, $p>1$, uniquely determines $\psi_q$ and $\psi_{\overline{q}}$ outside $\overline\Omega$ and also uniquely determines $t_q$ and $t_{\overline{q}}$ for all $|k|>k_0$. 
	\end{lemma}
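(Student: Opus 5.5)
The plan is to follow Nachman's boundary-integral argument \cite[Section~7]{Nachman1996}, using only the large-frequency solvability from Lemma~\ref{lem_cgo}. First we show that $h_q=\psi_q|_{\partial\Omega}$ solves \eqref{boundaryintegral}. Set $u=\psi_q|_\Omega\in H^1(\Omega)$; it solves \eqref{bvp} with $f=h_q$. Define the exterior function $\psi^{\rm ext}:=e^{\i kz}-S_k((\Lambda_q-\Lambda_0)h_q)$ on $\C\setminus\overline\Omega$. We compare it with $\psi_q$ via a Green identity on $B_R\cap(\C\setminus\overline\Omega)$ and on $\Omega$. For $x\notin\overline\Omega$ we have
$$
\psi_q(x)-e^{\i kx}=-\int_\Omega G_k(x-y)q(y)\psi_q(y)\dd y .
$$
This follows from \eqref{LS} after multiplying by $e^{\i kz}$ and using $G_k=e^{\i kz}g_k$. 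Since $y\mapsto G_k(x-y)$ is harmonic in $\Omega$ when $x\notin\overline\Omega$, the pairing \eqref{dn_map} gives
$$
\int_\Omega G_k(x-\cdot)\,q\,u=\langle(\Lambda_q-\Lambda_0)h_q,G_k(x-\cdot)|_{\partial\Omega}\rangle .
$$
Indeed, with $w$ the harmonic extension, $\langle\Lambda_q h,g\rangle-\langle\Lambda_0 g,h\rangle=\int qu w$, and $\Lambda_0$ is symmetric. The right-hand side is exactly $S_k((\Lambda_q-\Lambda_0)h_q)(x)$, which proves \eqref{exteriorpsi}. Letting $x\to\partial\Omega$ from outside and using continuity of $\psi_q$ (it lies in $W^{1,\pt}_{\loc}$ with $\pt>2$) together with continuity of the single layer across $\partial\Omega$ \cite[Lemma~7.1]{Nachman1996} yields \eqref{boundaryintegral}.

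Formula \eqref{boundaryt} is the same computation with $G_k(x-\cdot)$ replaced by the harmonic function $e^{\i\overline k\overline z}$. We have $e_k(z)\mu_q=e^{\i\overline k\overline z}\psi_q$, so $t_q(k)=\int_\Omega q\,u\,e^{\i\overline k\overline z}$, and this equals the stated pairing.

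For uniqueness, let $h\in H^{1/2}(\partial\Omega)$ solve $[I+S_k(\Lambda_q-\Lambda_0)]h=0$. Let $u$ be the solution of \eqref{bvp} with data $h$, which exists because $0$ is not a Dirichlet eigenvalue. Define $v=u$ in $\Omega$ and $v=-S_k((\Lambda_q-\Lambda_0)h)$ outside. The traces match by the equation for $h$. The normal derivatives also match: the jump relation for the single layer with density $\phi=(\Lambda_q-\Lambda_0)h$ must be checked against $\partial_\nu u=\Lambda_q h$ and the interior harmonic extension. Concretely, $-S_k\phi$ restricted to $\Omega$ is the harmonic function with data $h$, because its trace is $h$, and its inner normal derivative is therefore $\Lambda_0h$. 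The jump relation then gives an outer derivative of $\Lambda_0 h+\phi=\Lambda_q h$. Hence $v$ solves $(-\Delta+q)v=0$ on $\C$, and $e^{-\i kz}v\in L^{\pt}\cap L^\infty$ by the decay of the Faddeev kernel $g_k$, since $e^{-\i kz}S_k\phi=g_k*(\text{compactly supported})$. This puts $\mu:=e^{-\i kz}v$ in the homogeneous version of the class of Lemma~\ref{lem_cgo}: $P_k\mu=-q\mu$ with $\mu\in L^{\pt}$. Uniqueness for the free equation then gives $\mu=-\mathcal T_k\mu$, and invertibility of $I+\mathcal T_k$ on $L^\infty$ for $|k|>k_0$ forces $\mu=0$, so $h=0$.

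The main obstacle is this last step. It requires justifying that the glued $v$ is a global distributional solution when $\Omega$ is only Lipschitz and $q\in L^p$, and that $e^{-\i kz}S_k\phi$ lies in $L^{\pt}\cap L^\infty$. I would handle both by writing $S_k\phi=e^{\i kz}g_k*(\phi\,d\sigma)$, extending Nachman's Lemma~1.3 mapping properties to the compactly supported distribution $\phi\,d\sigma\in H^{-1}$, and verifying the transmission conditions weakly through the pairing \eqref{dn_map} rather than pointwise.

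Finally, the consequences follow directly. $\Lambda_q$ and $\Lambda_0$ determine the uniquely solvable equation \eqref{boundaryintegral}, hence $h_q$, hence $\psi_q$ outside $\overline\Omega$ by \eqref{exteriorpsi} and $t_q$ by \eqref{boundaryt}. The same applies to $\overline q$, because $\Lambda_{\overline q}f=\overline{\Lambda_q\overline f}$ and $\overline q$ also satisfies the nonzero-eigenvalue condition.
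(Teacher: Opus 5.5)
Your proposal is correct and follows the same route as the paper. The paper's proof simply invokes \cite[Theorem~5, equations~(0.17), (0.18), (7.3)]{Nachman1996}, with Lemma~\ref{lem_cgo} supplying admissible frequencies $|k|>k_0$ and \eqref{conjdn} transferring the conclusions to $\overline q$; you have written out that cited argument (Green's identity against the harmonic kernels $G_k(x-\cdot)$ and $e^{\i\overline k\overline z}$ for \eqref{exteriorpsi}--\eqref{boundaryt}, and gluing with the single-layer jump relation plus invertibility of $I+\mathcal T_k$ for injectivity), and the Lipschitz and weak-transmission details you flag are the ones that reference supplies.
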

	
	\begin{proof}
		These are \cite[Theorem~5, equations~(0.17), (0.18), and~(7.3)]{Nachman1996}. That theorem applies to $L^p$ potentials, $p>1$, at every nonexceptional complex frequency and does not impose that $q$ be real-valued. Lemma~\ref{lem_cgo} supplies nonexceptional frequencies for $|k|>k_0$. Equations~\eqref{boundaryintegral}--\eqref{boundaryt} therefore give the assertions for $q$. The same conclusions hold for $\overline q$, since zero is not a
		Dirichlet eigenvalue for $-\Delta+\overline q$ and
		$\Lambda_{\overline q}$ is determined by $\Lambda_q$ through
		\begin{equation}\label{conjdn}
			\Lambda_{\overline q}f=\overline{\Lambda_q\overline f},
		\end{equation}
	\end{proof}
	
	\section{Primitives and the complex-frequency equations}\label{sec_primitives}
	
	From now on, $q_1,q_2$ satisfy the hypotheses of Theorem~\ref{main_thm}, and
	\begin{equation}\label{equalDN}
		\Lambda_{q_1}=\Lambda_{q_2}.
	\end{equation}
	We identify $q_1,q_2$ as functions in $L^p(\C)$ by setting them to be zero outside $\Omega$. We also recall that by reducing $p$ if necessary, we may assume without loss of generality that $1<p<2$. We fix $k_0\geq 1$ so that \eqref{def_k_0} is satisfied for both $q_1$, $q_2$, and introduce the notations
	\begin{equation}\label{mu_j}
		\mu_j=\mu_{q_j},\qquad \mu_j^c=\mu_{\overline{q_j}},\qquad
		\psi_j=e^{\i kz}\mu_j,\qquad \psi_j^c=e^{\i kz}\mu_j^c.
	\end{equation}
	The superscript $c$ refers to the conjugate potential; it does not denote conjugation of the solution.
	
	Fix $z_*\in\C\setminus\overline{B}$ and define the following primitives on $\C$,
	\begin{equation}\label{primitive_def}
		\begin{aligned}
			A_j&=\frac14\bpartial^{-1}q_j,\qquad
			A_j^c=\frac14\bpartial^{-1}\overline{q_j},\\
			\omega_j(z)&=\frac1{2\pi}\int_\Omega
			\log\frac{|z-w|}{|z_*-w|}\,q_j(w)\dA(w).
		\end{aligned}
	\end{equation}
	The natural logarithm is locally in every finite $L^a$ space, so the last integral is well defined and $\omega_j\in L^\infty_\loc(\C)$. Since $\Delta\omega_j=q_j$ distributionally, elliptic regularity gives $\omega_j\in W^{2,p}_\loc(\C)$. The Sobolev embedding makes $\omega_j$ continuous. Differentiation of the logarithmic kernel and \eqref{solid_cauchy} give
	\begin{equation}\label{primitive_iden}
		\partial\omega_j=A_j,\qquad
		\bpartial\omega_j=\overline{A_j^c},\qquad
		\Delta\omega_j=q_j,\qquad \omega_j(z_*)=0.
	\end{equation}
	In particular, $\nabla\omega_j\in L^{\pt}(\C)$. Set
	\begin{equation}\label{primitive_diff}
		\omega=\omega_1-\omega_2,\qquad A=A_1-A_2,\qquad A^c=A_1^c-A_2^c.
	\end{equation}
	The Euclidean norm of the gradient satisfies
	\begin{equation}\label{gradientNorm}
		|\nabla\omega|^2=2\big(|A|^2+|A^c|^2\big).
	\end{equation}
	
	\begin{proposition}\label{prop_exterior_primitives}
		The functions $A_1,A_2$ agree on $\C\setminus\overline\Omega$, as do $A_1^c,A_2^c$. Moreover, $\omega_1=\omega_2$ on the unbounded component of $\C\setminus\overline\Omega$. In particular,
		\begin{equation}\label{compactPrimitive}
			\supp\omega\subset B.
		\end{equation}
	\end{proposition}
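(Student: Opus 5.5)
Since $\Lambda_{q_1}=\Lambda_{q_2}$, Lemma~\ref{lem_boundary} gives $\psi_1(\cdot,k)=\psi_2(\cdot,k)$ and $\psi_1^c(\cdot,k)=\psi_2^c(\cdot,k)$ on $\C\setminus\overline\Omega$ for every $|k|>k_0$. Multiplying by $e^{-\i kz}$, we get $\mu_1(z,k)=\mu_2(z,k)$ and $\mu_1^c=\mu_2^c$ for all $z\notin\overline\Omega$. Hence
$$
k\bigl(\mu_1(\cdot,k)-\mu_2(\cdot,k)\bigr)=0\quad\text{on }\C\setminus\overline\Omega .
$$
By Lemma~\ref{lem_firstcoef} this difference converges in $L^{\pt}(\C)$ to $-\i(A_1-A_2)$ as $|k|\to\infty$. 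An $L^{\pt}$ limit of functions vanishing on an open set vanishes a.e.\ there. Since $A_j=\frac14\bpartial^{-1}q_j$ is holomorphic (hence continuous) off $\overline\Omega$, we get $A_1=A_2$ pointwise on $\C\setminus\overline\Omega$. The same argument applied to $\mu_j^c$ and the potentials $\overline{q_j}$ gives $A_1^c=A_2^c$ there.

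Next, on $\C\setminus\overline\Omega$ the identities \eqref{primitive_iden} give $\partial\omega=A=0$ and $\bpartial\omega=\overline{A^c}=0$. So $\nabla\omega=0$ on this open set, and $\omega$ is locally constant on each component of $\C\setminus\overline\Omega$; it is smooth there since it is harmonic. The point $z_*\in\C\setminus\overline B$ lies in the unbounded component, because $\C\setminus\overline B$ is connected, unbounded and disjoint from $\overline\Omega$. Since $\omega(z_*)=\omega_1(z_*)-\omega_2(z_*)=0$, we conclude $\omega\equiv0$ on the unbounded component.

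Finally, $\C\setminus B$ is contained in $\C\setminus\overline B$ together with $\partial B$, all inside the unbounded component of $\C\setminus\overline\Omega$, because $\overline\Omega\subset B$ is compact. So $\omega=0$ on a neighbourhood of $\C\setminus B$, which gives \eqref{compactPrimitive}.

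The only step needing care is the first one: passing from equality of the exterior CGO solutions to equality of the primitives. This requires that Lemma~\ref{lem_boundary} yields exterior agreement of $\psi_j$ (and of $\psi_j^c$, using \eqref{conjdn}) for \emph{all} large $|k|$, so that the limit in Lemma~\ref{lem_firstcoef} can be taken. It also requires upgrading the resulting a.e.\ equality to pointwise equality via continuity off $\overline\Omega$. Everything after that is elementary.
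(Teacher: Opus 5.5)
Your proof is correct and follows essentially the same route as the paper: exterior agreement of the CGO solutions from Lemma~\ref{lem_boundary}, passage to the first asymptotic coefficient via Lemma~\ref{lem_firstcoef}, an upgrade from a.e.\ to pointwise equality by holomorphy of $A_j, A_j^c$ off $\overline\Omega$, and then $\nabla\omega=0$ on $\C\setminus\overline\Omega$ with the normalization $\omega(z_*)=0$. You also spell out why $\C\setminus B$ (including $\partial B$) lies in the open unbounded component of $\C\setminus\overline\Omega$, which the paper leaves implicit when deducing $\supp\omega\subset B$.
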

	
	\begin{proof}
		Lemmas~\ref{lem_boundary} and~\ref{lem_firstcoef} imply
		$$
		A=A^c=0\quad\text{almost everywhere on }\C\setminus\overline\Omega.
		$$
		These functions are smooth there, so the identities hold pointwise. Equation~\eqref{primitive_iden} gives $\partial\omega=\bpartial\omega=0$ on this open set. Hence $\omega$ is constant on each of its connected components. Its value on the unbounded component is zero because that component contains $z_*$ and $\omega(z_*)=0$. This proves \eqref{compactPrimitive}.
	\end{proof}

	By Lemma~\ref{lem_boundary}, the functions
	\begin{equation}\label{t_equal}
		t=t_{q_1}=t_{q_2},\qquad t^c=t_{\overline{q_1}}=t_{\overline{q_2}}
	\end{equation}
	are defined for $|k|>k_0$ where we recall that $k_0$ is fixed sufficiently large so that \eqref{def_k_0} is satisfied for both potentials. For $|k|>k_0$ and $z\in \C$ we introduce 
	\begin{equation}\label{pairedDefinitions}
		\bmu_j=\begin{pmatrix}\mu_j\\\mu_j^c\end{pmatrix},\qquad
		\bpsi_j=e^{\i kz}\bmu_j,\qquad
		\mathcal B(z,k)=\frac{e_{-k}(z)}{4\pi\overline k}
		\begin{pmatrix}0&t(k)\\t^c(k)&0\end{pmatrix}.
	\end{equation}
	Lemma~\ref{lem_complex_Dbar} is equivalent to
	\begin{equation}\label{pairedDbar}
		\frac{\partial}{\partial\overline k}\bmu_j=\mathcal B\overline{\bmu_j},
	\end{equation}
	for all $|k|>k_0$ and $z\in \overline{B}$. For $j=1,2$, define
	\begin{equation}\label{Epsi}
		\mathcal W_j(z)=\diag\big(e^{-\omega_j(z)},e^{-\overline{\omega_j(z)}}\big),\qquad
		\widetilde{\bmu}_j=\mathcal W_j\bmu_j,\qquad
		\widetilde{\bpsi}_j=\mathcal W_j\bpsi_j.
	\end{equation}
	Clearly,
	\begin{equation}\label{intertwining}
		\mathcal W_j\mathcal B=\mathcal B\overline{\mathcal W_j}.
	\end{equation}
	For $|k|>k_0$ and $z\in \C$, define
	\begin{equation}\label{difference_def}
		\bmu=\bmu_1-\bmu_2,\qquad
		\widetilde{\bmu}=\widetilde{\bmu}_1-\widetilde{\bmu}_2,\qquad
		\widetilde{\bpsi}=\widetilde{\bpsi}_1-\widetilde{\bpsi}_2
		=e^{\i kz}\widetilde{\bmu},\qquad
		c=e^{-\omega_1}-e^{-\omega_2}.
	\end{equation}
	
	\begin{proposition}\label{prop_paired_diff}
		For $|k|>k_0$ and $z\in \overline{B}$, there holds
		\begin{equation}\label{diff_Dbar}
			\frac{\partial}{\partial\overline k}\bmu=\mathcal B\overline{\bmu},\qquad
			\frac{\partial}{\partial\overline k}\widetilde{\bmu}
			=\mathcal B\overline{\widetilde{\bmu}}.
		\end{equation}
		There is a compact subset of $B$, independent of $k$, containing the supports of $c$, $\widetilde{\bmu}(\cdot,k)$, and $\widetilde{\bpsi}(\cdot,k)$. As $|k|\to\infty$,
		\begin{align}
			\widetilde{\bmu}(\cdot,k)&\longrightarrow
			\begin{pmatrix}c\\\overline c\end{pmatrix}
			\quad\text{uniformly on }\overline{B},\label{tildeLimit}\\
			k\bmu(\cdot,k)&\longrightarrow-\i \begin{pmatrix}A\\A^c\end{pmatrix}
			\quad\text{in }L^{\pt}(\C;\C^2).\label{delta_lim}
		\end{align}
		The error in \eqref{tildeLimit} is $O(|k|^{\sigma-1})$ with $\sigma$ as in \eqref{exponents}, and
		\begin{equation}\label{delta_bound}
			\sup_{|k|>k_0}\|k\bmu(\cdot,k)\|_{L^{\pt}(\C)}<\infty.
		\end{equation}
	\end{proposition}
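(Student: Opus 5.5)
The first identity in \eqref{diff_Dbar} is linear. Subtracting \eqref{pairedDbar} for $j=2$ from the one for $j=1$ is legitimate because, by \eqref{t_equal}, the coefficient matrix $\mathcal B$ is the same for both potentials. For the second identity, $\mathcal W_j$ does not depend on $k$, so
$$
\partial_{\overline k}\widetilde{\bmu}_j=\mathcal W_j\mathcal B\overline{\bmu_j}=\mathcal B\,\overline{\mathcal W_j}\,\overline{\bmu_j}=\mathcal B\overline{\widetilde{\bmu}_j},
$$
where the middle equality is the intertwining relation \eqref{intertwining}. Subtracting the cases $j=1,2$ then gives the claim. Both identities hold pointwise for $z\in\overline B$ by Lemma~\ref{lem_complex_Dbar} and Remark~\ref{remark_mu_point}.

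For the support statement, let $U_\infty$ denote the unbounded component of $\C\setminus\overline\Omega$. Proposition~\ref{prop_exterior_primitives} gives $\omega_1=\omega_2$ on $U_\infty$, so $c=0$ there. By Lemma~\ref{lem_boundary}, $\Lambda_{q_1}=\Lambda_{q_2}$ forces $\psi_1=\psi_2$ and $\psi_1^c=\psi_2^c$ on $\C\setminus\overline\Omega$, so $\bmu_1=\bmu_2$ there. Combining this with $\mathcal W_1=\mathcal W_2$ on $U_\infty$ shows that $\widetilde{\bmu}$ and $\widetilde{\bpsi}$ vanish on $U_\infty$. The complement of $U_\infty$ is compact, independent of $k$, and contained in $B$, because $\overline\Omega\subset B$ and $\C\setminus B\subset U_\infty$.

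For the limit \eqref{tildeLimit}, write
$$
\widetilde{\bmu}-\begin{pmatrix}c\\\overline c\end{pmatrix}=\mathcal W_1(\bmu_1-\mathbf 1)-\mathcal W_2(\bmu_2-\mathbf 1),\qquad \mathbf 1=(1,1)^T.
$$
Since $\omega_j$ is continuous, $\mathcal W_j$ is bounded on $\overline B$, and \eqref{muinfty} (applied to $q_j$ and to $\overline{q_j}$) bounds the right side by $O(|k|^{\sigma-1})$ uniformly on $\overline B$. For \eqref{delta_lim}, write $k\bmu=k(\bmu_1-\mathbf 1)-k(\bmu_2-\mathbf 1)$ and apply Lemma~\ref{lem_firstcoef} to $q_1,q_2,\overline{q_1},\overline{q_2}$. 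This gives the limit $-\i(A_1-A_2,\,A_1^c-A_2^c)^T$, which is $-\i(A,A^c)^T$ by \eqref{primitive_diff}. The uniform bound \eqref{delta_bound} follows from the term $|k|\|\mu-1\|_{L^{\pt}}\le C$ in \eqref{muSobolev}, again applied to each of the four potentials with a constant independent of $k$.

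None of these steps is a real obstacle. The only points needing care are, first, that the common $\mathcal B$ in \eqref{pairedDbar} relies on equality of the scattering transforms \eqref{t_equal}. Second, the support claim must use only the unbounded component of $\C\setminus\overline\Omega$, since $\omega_1$ and $\omega_2$ may differ by constants on bounded components.
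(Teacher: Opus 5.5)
Your proposal is correct and follows the paper's proof essentially step by step. You subtract \eqref{pairedDbar} using the common coefficient $\mathcal B$ from \eqref{t_equal}, pass the $k$-independent $\mathcal W_j$ through via \eqref{intertwining}, get vanishing on the unbounded component of $\C\setminus\overline\Omega$ from Lemma~\ref{lem_boundary} and Proposition~\ref{prop_exterior_primitives}, and read off the limits from \eqref{muinfty}, Lemma~\ref{lem_firstcoef} and \eqref{muSobolev}. Your extra details, namely the identity $\widetilde{\bmu}-(c,\overline c)^{\mathsf T}=\mathcal W_1(\bmu_1-\mathbf 1)-\mathcal W_2(\bmu_2-\mathbf 1)$ and the remark that $\C\setminus B$ lies in the unbounded component, simply make explicit what the paper leaves implicit.
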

	
	\begin{proof}
		Subtracting \eqref{pairedDbar} proves the equation for $\bmu$. Since $\mathcal W_j$ is independent of $k$, \eqref{intertwining} shows that $\widetilde{\bmu}_j$ satisfies the same equation as $\bmu_j$. Subtraction gives the second identity in \eqref{diff_Dbar}. On the unbounded component of $\C\setminus\overline\Omega$, Lemma~\ref{lem_boundary} gives $\bmu_1=\bmu_2$, and Proposition~\ref{prop_exterior_primitives} gives $\mathcal W_1=\mathcal W_2$. The same neighborhood of $\partial B$ therefore works for all the asserted supports. Finally, \eqref{tildeLimit} follows from \eqref{muinfty} and boundedness of $\mathcal W_j$ on $\overline{B}$, while \eqref{delta_lim} and \eqref{delta_bound} follow from Lemma~\ref{lem_firstcoef} and \eqref{muSobolev}.
	\end{proof}

	\section{Fundamental primitive estimate}\label{sec_fundamental}
	
	The main aim of this section is to prove the following primitive estimates.
	
	\begin{proposition}[The primitive estimate]\label{prop_fundamental}
		Let $q_1,q_2\in L^p(\Omega;\C)$, $p \in (1,2)$, be as in the hypothesis of Theorem~\ref{main_thm}, and assume that
		$$
		\Lambda_{q_1}=\Lambda_{q_2}.
		$$
		Let $B$ be as in \eqref{def_ball} and fix $z_*\in \C \setminus \overline{B}$. Let the functions $\omega$, $c$ and $\widetilde{\bpsi}$ be as in \eqref{primitive_diff} and \eqref{difference_def}. Let $R_0>k_0$ be fixed as in \eqref{chooseR}. Then there exists $C\geq 1$ such that,
		\begin{equation}\label{fundamental_psi}
			|\nabla\omega(z)|+|c(z)|\leq C \left(\int_{\mathcal K_{R_0}}|\widetilde{\bpsi}(z,k)|^2\dA(k)\right)^{1/2}
			\quad\text{for a.e. $z\in\C$}.
		\end{equation}
	\end{proposition}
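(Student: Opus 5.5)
The plan is to fix a spatial point $z\in B$ and regard everything as a function of the complex frequency only. For $z\notin B$ both sides of \eqref{fundamental_psi} vanish, by \eqref{compactPrimitive} and Proposition~\ref{prop_paired_diff}, so there is nothing to prove there. For $z\in\overline B$ and $k\in\mathcal K_{R_0}$ we have
$$
e^{-2R_0 r_B}\le|e^{\i kz}|\le e^{2R_0 r_B},
$$
where $r_B$ is the radius of $B$. Hence $\int_{\mathcal K_{R_0}}|\widetilde{\bpsi}|^2\dA$ and $\int_{\mathcal K_{R_0}}|\widetilde{\bmu}|^2\dA$ are comparable, with constants depending only on $R_0$ and $B$. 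It therefore suffices to bound $|c(z)|$ and $|A(z)|+|A^c(z)|$ (recall \eqref{gradientNorm}) by the annular $L^2$ norm of $\widetilde{\bmu}(z,\cdot)$.

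\textbf{Step 1: inversion.} Put $\kappa=R_0/k$, which maps $\{|k|>R_0\}$ onto the punctured unit disk. Define $G(\kappa)=\widetilde{\bmu}(z,R_0/\kappa)$ and $G_0(\kappa)=\bmu(z,R_0/\kappa)$. By \eqref{diff_Dbar} and the chain rule,
$$
\partial_{\overline\kappa}G=\widetilde{\mathcal B}\,\overline G,\qquad \partial_{\overline\kappa}G_0=\widetilde{\mathcal B}\,\overline{G_0},
$$
with $|\widetilde{\mathcal B}(\kappa)|\lesssim|t(R_0/\kappa)|+|t^c(R_0/\kappa)|$ divided by $|\kappa|$. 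The required input is that $\|\widetilde{\mathcal B}\|_{L^\tau(\D)}$ can be made small for some $\tau>2$ by taking $R_0$ large, uniformly in $z$ (note that $|e_{-k}|=1$). Undoing the substitution, this amounts to
$$
\int_{|k|>R_0}\bigl(|t(k)|^\tau+|t^c(k)|^\tau\bigr)|k|^{\tau-4}\dA(k)\longrightarrow0 .
$$
I would prove this by splitting $t(k)=\int e_kq\dA+\int e_kq(\mu_q-1)\dA$. The first term is a Fourier transform of $q$, so it lies in $L^{p'}$ by Hausdorff--Young. The second term is $O(|k|^{\sigma-1})$ by \eqref{muinfty}. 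Taking $\tau>2$ close to $2$ and applying Hölder, both pieces contribute finite integrals, and the integrals over $|k|>R_0$ tend to zero. This is where \eqref{chooseR} fixes $R_0$.

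\textbf{Step 2: similarity principle.} With $\widetilde{\mathcal B}$ small in $L^\tau(\D)$, I would construct, via a Neumann series for $\bpartial^{-1}$ on $\D$ (bounded from $L^\tau$ into Hölder continuous functions), a matrix $M(\kappa)$ that is continuous on $\overline\D$, satisfies $|M|+|M^{-1}|\le C$ uniformly in $z$, and has the property that every solution of $\partial_{\overline\kappa}F=\widetilde{\mathcal B}\overline F$ can be written $F=Mh$ with $h$ holomorphic on $\D\setminus\{0\}$. The antilinear structure means $M$ must act on the realification $\C^2\cong\R^4$, or equivalently $F=M_1h+M_2\overline h$. This is the step I expect to be the main obstacle: the construction must genuinely handle the coupled antilinear system and give bounds independent of $z$. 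What I would try first is the Bers--Vekua approach of writing $\widetilde{\mathcal B}\overline F=\beta F$ and solving $\partial_{\overline\kappa}M=\beta M$ by contraction.

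\textbf{Step 3: removability and mean-value bounds.} Since $G$ and $G_0$ are bounded near $\kappa=0$ by \eqref{muinfty}, the corresponding holomorphic vectors $h$ and $h_0$ extend holomorphically across $0$. The mean-value property on the annulus $\tfrac12<|\kappa|<1$, which is the image of $\mathcal K_{R_0}$, gives $|h(0)|\le C\|h\|_{L^2}\le C\|G\|_{L^2}$. By \eqref{tildeLimit} and continuity of $M$ at $0$, $G(0)=(c(z),\overline{c(z)})$, so $|c(z)|\le C\|\widetilde{\bmu}(z,\cdot)\|_{L^2(\mathcal K_{R_0})}$.

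For $A$, note that $G_0(\kappa)\to0$ as $\kappa\to0$, so $h_0(0)=0$ and $h_0(\kappa)/\kappa$ is holomorphic on $\D$. Again by the mean-value property, $|h_0'(0)|\le C\|h_0\|_{L^2}$. By \eqref{delta_lim}, passing to a subsequence $|k|\to\infty$ along which the convergence holds for a.e.\ $z$, we get $R_0\,G_0(\kappa)/\kappa\to-\i(A,A^c)$. Hence $|A(z)|+|A^c(z)|\le CR_0|M(0)h_0'(0)|\le C\|G_0\|_{L^2}$ for a.e.\ $z$.

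\textbf{Step 4: back to $\widetilde{\bmu}$.} From \eqref{Epsi} and \eqref{difference_def},
$$
\bmu=\mathcal W_1^{-1}\bigl(\widetilde{\bmu}-\diag(c,\overline c)\,\bmu_2\bigr).
$$
Since $\omega_j$ is continuous on $\overline B$ and $|\bmu_2|\le2$, this gives $\|G_0\|_{L^2}\le C(\|G\|_{L^2}+|c(z)|)$. Combining with the bound on $|c(z)|$ from Step 3 and using \eqref{gradientNorm} yields \eqref{fundamental_psi}.
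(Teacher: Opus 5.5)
Your proposal is essentially the paper's proof: the inversion $\kappa=R_0/k$, smallness of the coefficient in $L^\tau(\D)$ via Hausdorff--Young plus \eqref{muinfty}, a similarity matrix obtained by contraction from the rank-one coefficient $\beta=(\widetilde{\mathcal B}\,\overline F)F^*/|F|^2$, removability with the mean-value identity on $\mathcal E$, and the transfer through $\mathcal W_1^{-1}$ and $c$. Note that this similarity matrix depends on the solution, which is all you use (a single complex-linear $M$ serving every solution of the antilinear system cannot exist, so the realification detour is unnecessary and you need a separate matrix for $G_0$); and where the paper applies the same lemma to $(R_0/\kappa)\,\bmu(z,R_0/\kappa)$, which solves the system with $1/\overline\kappa$ replaced by $1/\kappa$ and lies in $L^2(\D)$ since $\sigma<1$, your extraction of $A$ via $h_0(0)=0$ and a Cauchy estimate for $h_0'(0)$ works equally well.
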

	
	Let us emphasize that the key content of \eqref{fundamental_psi} lies in the region $z\in B$ as all of the functions involved are compactly supported in $B$. Before proving the proposition, we need to fix some notation and prove some auxiliary lemmas. We continue to identify functions in $L^p(\Omega)$ with their extensions by zero to $\C$. Let
	\begin{equation}\label{disk_annulus}
		\D=\{\kappa\in\C:|\kappa|<1\},\qquad
		\mathcal E=\{\kappa\in\C:1/2<|\kappa|<1\}.
	\end{equation}
	All vector norms below are Euclidean, and matrix norms are the corresponding operator norms. We fix an exponent
	\begin{equation}\label{tauchoice}
		2<\tau<\min\{p',\pt,2/\sigma\}.
	\end{equation}
	Such an exponent exists by \eqref{exponents}.
	
	\begin{lemma}\label{lem_small_coeff}
		For $R>k_0$, $z\in\C$, and $0<|\kappa|<1$, define
		\begin{align}
			Q_{0,z,R}(\kappa)
			&=-\frac{e_{-R/\kappa}(z)}{4\pi\overline\kappa}
			\begin{pmatrix}0&t(R/\kappa)\\t^c(R/\kappa)&0\end{pmatrix},\label{Qzero}\\
			Q_{1,z,R}(\kappa)
			&=-\frac{e_{-R/\kappa}(z)}{4\pi\kappa}
			\begin{pmatrix}0&t(R/\kappa)\\t^c(R/\kappa)&0\end{pmatrix},\label{Qone}
		\end{align}
		where $t$ and $t^c$ are as in \eqref{t_equal}. Then, for $\ell=0,1$,
		\begin{equation}\label{Qsmall}
			\sup_{z\in\C}\|Q_{\ell,z,R}\|_{L^\tau(\D)}
			\leq C\big(R^{-\frac{2}{p'}}+R^{\sigma-1}\big)
			\longrightarrow0\qquad\text{as }R\longrightarrow\infty.
		\end{equation}
	\end{lemma}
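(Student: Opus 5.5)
Since $|e_{-R/\kappa}(z)|=1$ and the matrix norm of the off-diagonal matrix is $\max\{|t|,|t^c|\}$, both coefficients satisfy, uniformly in $z$,
$$
\|Q_{\ell,z,R}(\kappa)\|\leq \frac{|t(R/\kappa)|+|t^c(R/\kappa)|}{4\pi|\kappa|},\qquad \ell=0,1 .
$$
So the whole task is a bound on the scattering transforms $t,t^c$ for $|k|>k_0$, followed by the substitution $k=R/\kappa$. We treat $t$; the argument for $t^c$ is identical with $q_1$ replaced by $\overline{q_1}$ (recall $t=t_{q_1}$, $t^c=t_{\overline{q_1}}$ by \eqref{t_equal}).

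\emph{Step 1: bound on $t$.} Using \eqref{tdef}, split $\mu_{q_1}=1+(\mu_{q_1}-1)$ to get
$$
t(k)=\widehat{q_1}(k)+\int_\C e_k q_1(\mu_{q_1}-1)\dA ,
$$
where $\widehat{q_1}(k)=\int e_k q_1\dA$ is, up to rescaling, the Fourier transform of $q_1$ at frequency proportional to $(\mathrm{Re}\,k,-\mathrm{Im}\,k)$. For the remainder, \eqref{muinfty} and $q_1\in L^p$ with compact support give the bound $C|k|^{\sigma-1}$. For the main term, Hausdorff--Young (valid since $1<p<2$) gives $\widehat{q_1}\in L^{p'}(\C)$.

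\emph{Step 2: change of variables.} With $k=R/\kappa$ we have $\dA(k)=R^2|\kappa|^{-4}\dA(\kappa)$, so
$$
\int_{\D}\Big|\frac{\widehat{q_1}(R/\kappa)}{\kappa}\Big|^\tau\dA(\kappa)
=R^{2-\tau}\int_{|k|>R}|\widehat{q_1}(k)|^\tau|k|^{\tau-4}\dA(k).
$$
Apply Hölder with exponents $p'/\tau$ and its dual $s$, which is admissible since $\tau<p'$ by \eqref{tauchoice}. The weight factor becomes $\big(\int_{|k|>R}|k|^{(\tau-4)s}\dA\big)^{1/s}$, which is finite because $(4-\tau)s>2$; indeed this is equivalent to $\tau/p'+(4-\tau)/2>1$, which holds for $\tau<p'$. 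Computing the powers of $R$ then gives
$$
\|\widehat{q_1}(R/\cdot)/\cdot\|_{L^\tau(\D)}\leq C\,R^{-2/p'}\,\|\widehat{q_1}\|_{L^{p'}} .
$$
(Check: at $\tau=p'$ the exponent reduces to $(2-\tau)/\tau+(2-\tau)/\tau\cdot$, and the algebra collapses to $R^{-2/p'}$.)

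\emph{Step 3: the remainder term.} Its contribution is bounded by
$$
\Big(\int_{\D}|R/\kappa|^{(\sigma-1)\tau}|\kappa|^{-\tau}\dA\Big)^{1/\tau}=R^{\sigma-1}\Big(\int_{\D}|\kappa|^{-\sigma\tau}\dA\Big)^{1/\tau},
$$
and the integral is finite precisely because $\sigma\tau<2$, which is the condition $\tau<2/\sigma$ in \eqref{tauchoice}.

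Adding the two contributions yields \eqref{Qsmall}. The main obstacle is the exponent bookkeeping in Step 2: one has to confirm that Hölder can be applied with $\tau<p'$ and that the resulting power of $R$ is exactly $-2/p'$. If that direct computation turns out to be awkward, the fallback is to write $|k|^{\tau-4}\le R^{\tau-4+\epsilon}|k|^{-\epsilon}$ on $\{|k|>R\}$ and optimize $\epsilon$, accepting any negative power of $R$.
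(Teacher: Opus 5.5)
Your proof is the paper's argument, step for step. You use the same splitting $t=T+E$, with Hausdorff--Young for the Fourier part and \eqref{muinfty} for the remainder. You use the same change of variables $k=R/\kappa$, producing the weight $R^{2-\tau}|k|^{\tau-4}$ on $\{|k|>R\}$, and the same H\"older step. The resulting powers $R^{-2/p'}$ and $R^{\sigma-1}$ are correct.

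One justification in Step 2 is wrong, though. Your convergence condition $(4-\tau)s>2$, equivalently $\tau/p'+(4-\tau)/2>1$, rearranges to $\tau\bigl(\tfrac12-\tfrac1{p'}\bigr)<1$. Since $\tfrac12-\tfrac1{p'}=\tfrac1p-\tfrac12=\tfrac1{\pt}$, the condition is exactly $\tau<\pt$. It does not follow from $\tau<p'$ when $p<4/3$, because then $\pt<p'$. For example, $p=1.1$ and $\tau=3$ give $s=11/8$ and $(4-\tau)s=11/8<2$, so the weight integral diverges. The fix is to invoke the constraint $\tau<\pt$, which \eqref{tauchoice} also imposes; this is exactly the reason the paper gives.

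With that correction the exponent bookkeeping is direct: $R^{2-\tau}\cdot R^{\tau-2-2\tau/p'}=R^{-2\tau/p'}$, and taking the $\tau$-th root gives $R^{-2/p'}$. You can therefore drop both the garbled ``check'' at $\tau=p'$ and the fallback.
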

	
	\begin{proof}
		We prove the claim only for $Q_{0,z,R}$ as the claim for $Q_{1,z,R}$ follows analogously. Recalling \eqref{t_equal}, we write for each $|k|>k_0$,
		\begin{equation}\label{tdecomposition}
			t(k)=T(k)+E(k),\qquad
			T(k)=\int_\C e_k(z)q_1(z)\dA(z).
		\end{equation}
		The function $T(k)$ is the Fourier transform of $q_1$ evaluated at $(-2\mathrm{Re}\, k, 2\mathrm{Im}\,k)$. The Hausdorff--Young inequality therefore gives $T\in L^{p'}(\C)$, while \eqref{muinfty} gives
		\begin{equation}\label{tError}
			|E(k)|\leq\|q_1\|_{L^1}\|\mu_{q_1}(\cdot,k)-1\|_{L^\infty}
			\leq C|k|^{\sigma-1}.
		\end{equation}
		The same decomposition applies to $t^c$, with $q_1$ replaced by $\overline{q_1}$, and all of the estimates below hold for $t^c$ as well. Under $k=R/\kappa$, the area element transforms as $\dA(\kappa)=R^2|k|^{-4}\dA(k)$. Since $|e_{-k}(z)|=1$, we obtain
		\begin{equation}\label{Qintegral}
			\|Q_{0,z,R}\|_{L^\tau(\D)}^\tau
			\leq\frac{R^{2-\tau}}{(4\pi)^\tau}
			\int_{|k|>R}\big(|t(k)|^\tau+|t^c(k)|^\tau\big)|k|^{\tau-4}\dA(k).
		\end{equation}
		We proceed to estimate the right hand side by using the decomposition \eqref{tdecomposition}. For the Fourier term $T(k)$ appearing in the right hand side, H\"older's inequality yields
		\begin{align*}
			\int_{|k|>R}|T(k)|^\tau|k|^{\tau-4}\dA(k)
			&\leq\|T\|_{L^{p'}}^\tau
			\left(\int_{|k|>R}|k|^{(\tau-4)p'/(p'-\tau)}\dA(k)\right)^{1-\tau/p'}\\
			&\leq C\|T\|_{L^{p'}}^\tau R^{\tau-2-2\tau/p'} \leq C\,R^{\tau-2-2\tau/p'}
		\end{align*}
		The radial integral converges because $\tau<\pt$, which is equivalent to $$(\tau-4)p'/(p'-\tau)<-2.$$ Next, \eqref{tError} gives
		$$
		\int_{|k|>R}|E(k)|^\tau|k|^{\tau-4}\dA(k)
		\leq C\int_{|k|>R}|k|^{\tau\sigma-4}\dA(k)
		\leq C R^{\tau\sigma-2},
		$$
		where we used $\tau\sigma<2$. Substitution of the previous two bounds into \eqref{Qintegral} proves \eqref{Qsmall}.
	\end{proof}
	
	\begin{lemma}\label{lem_matrix_sim}
		Let $\tau$ be fixed as in \eqref{tauchoice}. There are constants $\varepsilon\in (0,1)$ and $C\geq 1$ with the following property: suppose that $Q\in L^\tau(\D;\C^{2\times2})$, $\|Q\|_{L^\tau}\leq\varepsilon$, and
		$$
		u\in W^{1,\tau}_\loc(\D\setminus\{0\};\C^2)\cap L^2(\D;\C^2),\qquad
		\bpartial u=Q\overline u\quad\text{in }\D\setminus\{0\}.
		$$
		Then $u$ has a continuous extension to $\D$, and
		\begin{equation}\label{ann_tr}
			|u(0)|\leq C\|u\|_{L^2(\mathcal E)},
		\end{equation}
		where we recall that $\mathcal E$ is given by \eqref{disk_annulus}.
	\end{lemma}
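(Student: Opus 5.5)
We follow the similarity principle sketched in the introduction: write $u=\mathcal M v$ on $\D\setminus\{0\}$, where $\mathcal M$ is an invertible $2\times2$ matrix function with $\|\mathcal M\|_{L^\infty(\D)}+\|\mathcal M^{-1}\|_{L^\infty(\D)}\le C$, and $v$ is holomorphic on $\D\setminus\{0\}$. Removability of the singularity at the origin then yields both the extension and the annular bound.

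\emph{Step 1: linearize the antilinear term.} The equation $\bpartial u=Q\overline u$ is only real-linear. On the set where $u\neq0$ we set
\[
\widetilde Q=Q\,\frac{\overline u\,u^{*}}{|u|^2},
\qquad \widetilde Q=0 \text{ where } u=0 .
\]
Then $\bpartial u=\widetilde Q u$, and pointwise $|\widetilde Q|\le|Q|$, so $\|\widetilde Q\|_{L^\tau(\D)}\le\varepsilon$. This reduction is standard. Alternatively, one can work with the real $4\times4$ system for $(\mathrm{Re}\,u,\mathrm{Im}\,u)$.

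\emph{Step 2: construct $\mathcal M$.} We seek $\mathcal M$ solving $\bpartial\mathcal M=\widetilde Q\mathcal M$ on $\D$, with $\mathcal M=I+N$. Extending $\widetilde Q$ by zero to $\C$, we solve
\[
N=\bpartial^{-1}\bigl(\widetilde Q(I+N)\bigr)
\]
by contraction in $L^\infty(\D)$. On functions supported in $\D$, the solid Cauchy transform maps $L^\tau$ into $C^{0,1-2/\tau}$, hence into $L^\infty(\D)$, because $\tau>2$. Its operator norm is bounded by a constant $C_\tau$. Choosing $\varepsilon$ so that $C_\tau\varepsilon\le 1/4$ gives
\[
\|N\|_{L^\infty}\le 1/2 .
\]
Consequently $\mathcal M$ is continuous, invertible, and $|\mathcal M^{-1}|\le2$. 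In addition, $\mathcal M\in W^{1,\tau}_\loc$.

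\emph{Step 3: pass to a holomorphic vector.} Put $v=\mathcal M^{-1}u$ on $\D\setminus\{0\}$. Using $\bpartial(\mathcal M^{-1})=-\mathcal M^{-1}\widetilde Q$, which is valid because both factors lie in $W^{1,\tau}_\loc\cap L^\infty$, we obtain $\bpartial v=0$ distributionally. Hence $v$ is holomorphic on $\D\setminus\{0\}$. Moreover $v\in L^2(\D)$, since $|v|\le2|u|$.

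\emph{Step 4: remove the singularity and estimate.} A holomorphic function in $L^2$ of a punctured disk has a removable singularity: in the Laurent series, the terms $z^{-n}$ with $n\ge1$ are not square integrable near $0$. So $v$ extends holomorphically to $\D$, and $u=\mathcal M v$ extends continuously. The mean-value property on the circles $|\kappa|=r$ for $r\in(1/2,1)$, averaged in $r$, gives
\[
|v(0)|\le C\|v\|_{L^1(\mathcal E)}\le C\|v\|_{L^2(\mathcal E)} .
\]
Therefore
\[
|u(0)|\le\tfrac32|v(0)|\le C\|v\|_{L^2(\mathcal E)}\le 2C\|u\|_{L^2(\mathcal E)} .
\]

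\textbf{Main obstacle.} The delicate point is the justification in Steps 1 and 3. We must check that $\widetilde Q$ is measurable and that the product rule holds at the stated regularity near $0$. The hypothesis gives $u\in W^{1,\tau}_\loc$ only away from $0$. Since holomorphy is a local statement there, this suffices: $\bpartial v=0$ on $\D\setminus\{0\}$, and the $L^2$ integrability of $v$ handles the origin.
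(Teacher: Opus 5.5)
Your proposal is correct and follows the paper's proof step by step. You use the same linearization $\widetilde Q u=Q\overline u$ with $|\widetilde Q|\le|Q|$, and the same contraction $P=I+\bpartial^{-1}(\widetilde QP)$ built from the $L^\tau\to L^\infty$ bound for the solid Cauchy transform. You then conclude, as the paper does, with removability of the $L^2$ singularity of the holomorphic vector $P^{-1}u$ and the mean-value identity averaged over $\mathcal E$; only the numerical constants differ, immaterially.
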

	
	\begin{proof}
		By the Sobolev embedding theorem, $u$ is continuous on $\D \setminus \{0\}$. Let us define a matrix-valued function $\widetilde Q$ on $\C$ as follows. For all $\kappa\in \mathbb D \setminus \{0\}$ with $u(\kappa)\ne0$, set
		$$
		\widetilde Q(\kappa)=\frac{(Q(\kappa)\overline{u(\kappa)})u(\kappa)^*}{|u(\kappa)|^2},
		$$
		where ${}^*$ denotes conjugate transpose, and set $\widetilde Q(\kappa)=0$ everywhere else. Then
		\begin{equation}\label{Q_est}
			\widetilde Qu=Q\overline u,\qquad |	\widetilde Q|\leq|Q|,\qquad
			\|\widetilde Q\|_{L^\tau(\C)}\leq\varepsilon.
		\end{equation}
		For functions on $\C$ that are supported in $\overline\D$, the solid Cauchy transform satisfies
		\begin{equation}\label{local_cauchy_bound}
			\|\bpartial^{-1}f\|_{L^\infty(\C)}\leq c_0\|f\|_{L^\tau(\D)},
		\end{equation}
		for some $c_0>0$. Indeed, H\"older's inequality applies to $|\kappa-w|^{-1}$ on $\D$, uniformly in $\kappa$, since $\tau/(\tau-1)<2$.
		
		Choose $\varepsilon \in (0,1)$ sufficiently small so that $c_0\varepsilon\leq1/4$. As $\widetilde Q$ is supported in $\overline{\mathbb D}$, it follows from \eqref{Q_est}-\eqref{local_cauchy_bound} that the equation
		$$
		P=I+\bpartial^{-1}(	\widetilde QP)
		$$
		has a unique bounded matrix solution $P \in L^\infty(\C;\C^{2\times 2})$ by contraction, and
		\begin{equation}\label{Pbounds}
			\|P-I\|_{L^\infty}\leq\frac13,\qquad
			\|P\|_{L^\infty}\leq\frac43,\qquad
			\|P^{-1}\|_{L^\infty}\leq\frac32.
		\end{equation}
		The Cauchy transform and the $L^\tau$ boundedness of its first derivatives imply $P\in W^{1,\tau}_\loc(\C)$. In particular, $P$ is continuous, is invertible at every point, and satisfies 
		$$\bpartial P=	\widetilde QP.$$
		
		It follows that $h=P^{-1}u$ satisfies $\bpartial h=0$ on $\D\setminus\{0\}$. It is therefore holomorphic there. By \eqref{Pbounds}, $h\in L^2(\D)$. Therefore the singularity at zero is removable and $h$ is holomorphic on $\D$, and $u=Ph$ extends continuously across zero. The mean-value identity, integrated over the circles in $\mathcal E$, gives
		$$
		|h(0)|\leq |\mathcal E|^{-1/2}\|h\|_{L^2(\mathcal E)}.
		$$
		Combining this with \eqref{Pbounds} proves \eqref{ann_tr}.
	\end{proof}
	
	In view of Lemma~\ref{lem_small_coeff}, and with $\varepsilon \in (0,1)$ fixed as in Lemma~\ref{lem_matrix_sim}, let us now fix a sufficiently large $R_0>k_0$ so that
	\begin{equation}\label{chooseR}
		\sup_{z\in\C}\|Q_{\ell,z,R_0}\|_{L^\tau(\D)}\leq\varepsilon,
		\qquad \quad \ell=0,1.
	\end{equation}
	Let
	\begin{equation}\label{frequency_ann}
		\mathcal K_{R_0}=\left\{k\in\C\,:\,R_0<|k|<2R_0\right\}.
	\end{equation}
	
	\begin{proposition}\label{prop_twoTraces}
		There exists $C\geq 1$ such that
		\begin{align}
			|c(z)|
			\leq C\|\widetilde{\bmu}(z,\cdot)\|_{L^2(\mathcal K_{R_0})}\label{firstTrace}
		\end{align}
		for all $z\in \overline{B}$ and 
		\begin{align}
			|A(z)|+|A^c(z)|
			\leq C\|\bmu(z,\cdot)\|_{L^2(\mathcal K_{R_0})}\label{ATrace}
		\end{align}
		for almost every $z\in B$.
	\end{proposition}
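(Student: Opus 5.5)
The plan is to fix $z$, invert the frequency variable via $k=R_0/\kappa$, and apply Lemma~\ref{lem_matrix_sim} on $\D$. The origin $\kappa=0$ corresponds to $k=\infty$, where the limits \eqref{tildeLimit} and \eqref{delta_lim} identify $c$ and $A,A^c$. The annulus $\mathcal E$ corresponds exactly to $\mathcal K_{R_0}$.

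For \eqref{firstTrace}, fix $z\in\overline B$ and set $u(\kappa)=\widetilde{\bmu}(z,R_0/\kappa)$ for $0<|\kappa|<1$.
\begin{itemize}
\item \emph{Equation.} Since $k$ depends antiholomorphically on $\overline\kappa$ through $\overline k=R_0/\overline\kappa$, we have $\partial_{\overline\kappa}=-R_0\overline\kappa^{-2}\partial_{\overline k}$. With $\overline k=R_0/\overline\kappa$, \eqref{diff_Dbar} then gives
$$\bpartial_\kappa u=-\frac{R_0}{\overline\kappa^2}\,\frac{e_{-R_0/\kappa}(z)\,\overline\kappa}{4\pi R_0}\begin{pmatrix}0&t\\t^c&0\end{pmatrix}\overline u=Q_{0,z,R_0}\overline u.$$
\item \emph{Regularity.} By \eqref{chooseR}, $\|Q_{0,z,R_0}\|_{L^\tau(\D)}\le\varepsilon$. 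Remark~\ref{remark_mu_point} gives $u\in C^1(\D\setminus\{0\})$, hence $u\in W^{1,\tau}_\loc$.
\item \emph{Boundedness.} By \eqref{muinfty}, $u$ is bounded, hence $u\in L^2(\D)$.
\item \emph{Conclusion.} Lemma~\ref{lem_matrix_sim} says $u$ extends continuously to $0$ with $|u(0)|\le C\|u\|_{L^2(\mathcal E)}$. By \eqref{tildeLimit}, $u(0)=(c(z),\overline{c(z)})$, so $|c(z)|\le|u(0)|$.
\item \emph{Change of variables.} Since $\dA(\kappa)=R_0^2|k|^{-4}\dA(k)$ and $|k|\sim R_0$ on $\mathcal K_{R_0}$, we get $\|u\|_{L^2(\mathcal E)}\le C\|\widetilde\bmu(z,\cdot)\|_{L^2(\mathcal K_{R_0})}$, with $C$ depending on $R_0$ only. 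This proves \eqref{firstTrace}.
\end{itemize}

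For \eqref{ATrace}, take instead $v(\kappa)=k\,\bmu(z,k)=(R_0/\kappa)\bmu(z,R_0/\kappa)$. The equation for $v$ follows from \eqref{diff_Dbar}:
$$\partial_{\overline\kappa}v=\frac{R_0}{\kappa}\,Q_{0,z,R_0}\,\overline{\bmu}=\frac{R_0}{\kappa}\,Q_{0,z,R_0}\,\frac{\overline\kappa}{R_0}\,\overline v=\frac{\overline\kappa}{\kappa}\,Q_{0,z,R_0}\,\overline v=Q_{1,z,R_0}\,\overline v,$$
since $\frac{\overline\kappa}{\kappa}\cdot\frac{1}{\overline\kappa}=\frac1\kappa$. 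This is exactly why $Q_1$ was introduced. Lemma~\ref{lem_matrix_sim} therefore applies to $v$, provided $v\in L^2(\D)$ and $v(0)=-\i(A(z),A^c(z))$. On $\mathcal E$ we have $|v|\le 2R_0|\bmu|$, which gives the right-hand side of \eqref{ATrace}.

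The main obstacle is \emph{pointwise} control of $v$. The bounds \eqref{delta_bound} and \eqref{delta_lim} hold only in $L^{\pt}(\C)$ in $z$, not for each fixed $z$. I would proceed as follows.
\begin{itemize}
\item \emph{Square integrability.} By Fubini and $\pt>2$, the bound \eqref{delta_bound} gives
$$\int_B\int_\D|v(z,\kappa)|^2\dA(\kappa)\dA(z)\le C\int_\D\|k\bmu(\cdot,k)\|^2_{L^{\pt}}\dA(\kappa)<\infty.$$
Hence $v(z,\cdot)\in L^2(\D)$ for a.e.\ $z\in B$. The lemma then gives a continuous extension $v(z,0)$ and the estimate $|v(z,0)|\le C\|v(z,\cdot)\|_{L^2(\mathcal E)}$, with $C$ uniform in $z$.
\item \emph{Identifying the value at $0$.} Along any sequence $k_n\to\infty$, \eqref{delta_lim} yields a subsequence with $k_n\bmu(z,k_n)\to-\i(A,A^c)(z)$ for a.e.\ $z$. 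Continuity of $v(z,\cdot)$ at $0$ forces the limit to equal $v(z,0)$ wherever both exist. Working along one fixed countable sequence, say $k_n=2^n$, this identifies $v(z,0)=-\i(A(z),A^c(z))$ for a.e.\ $z$.
\end{itemize}
This yields \eqref{ATrace} almost everywhere, since $|A|+|A^c|\le\sqrt2\,|v(0)|$.
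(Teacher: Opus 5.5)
Your proposal is correct and follows the paper's proof step for step. It uses the same substitutions $u_0=\widetilde{\bmu}(z,R_0/\kappa)$ and $(R_0/\kappa)\bmu(z,R_0/\kappa)$, the coefficients $Q_{0,z,R_0}$, $Q_{1,z,R_0}$, Lemma~\ref{lem_matrix_sim}, identification of the value at $\kappa=0$ via an a.e.-convergent sequence taken from \eqref{delta_lim}, and the change of variables onto $\mathcal K_{R_0}$. The only difference is that you get $v(z,\cdot)\in L^2(\D)$ for a.e.\ $z$ by Fubini and \eqref{delta_bound}, whereas the paper notes that \eqref{muinfty} already gives $|v(z,\kappa)|\le CR_0^{\sigma}|\kappa|^{-\sigma}$ for every $z\in\overline B$; so the ``obstacle'' you flag is not really one, though both routes suffice for the almost-everywhere conclusion.
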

	
	\begin{proof}
		For $z\in\overline B$ and $0<|\kappa|<1$, set
		$$
		u_0(z,\kappa)
		=\widetilde{\bmu}(z,R_0/\kappa),
		\qquad
		u_1(z,\kappa)
		=\frac{R_0}{\kappa}\bmu(z,R_0/\kappa).
		$$
		By Lemma~\ref{lem_kregularity}, these functions are continuously
		differentiable in $\kappa$ away from zero. The chain rule and
		\eqref{diff_Dbar} give
		$$
		\frac{\partial u_\ell}{\partial\overline\kappa}
		=Q_{\ell,z,R_0}\overline{u_\ell},
		\qquad \ell=0,1,
		\quad\text{in $\D\setminus\{0\}$}.
		$$
		Moreover, \eqref{muinfty} and the boundedness of $\mathcal W_j$
		on $\overline B$ imply
		$$
		|u_0(z,\kappa)|\leq C,
		\qquad
		|u_1(z,\kappa)|
		\leq C R_0^\sigma|\kappa|^{-\sigma},
		$$
		uniformly in $z\in\overline B$. Since $\sigma<1$, both functions
		belong to $L^2(\D;\C^2)$ for every fixed $z$.
		Consequently, \eqref{chooseR} and Lemma~\ref{lem_matrix_sim}
		give continuous extensions across zero and
		$$
		|u_\ell(z,0)|
		\leq C\|u_\ell(z,\cdot)\|_{L^2(\mathcal E)},
		\qquad \ell=0,1,
		$$
		with $C$ independent of $z$.
		
		By \eqref{tildeLimit},
		$$
		u_0(z,0)=
		\left(c(z),\overline{c(z)}\right)^{\mathsf T},
		\qquad z\in\overline B.
		$$
		To identify the second value, use the convergence in
		\eqref{delta_lim} to choose a sequence $|k_m|>R_0$,
		$|k_m|\to\infty$, such that
		$$
		k_m\bmu(z,k_m)
		\longrightarrow
		-\i\begin{pmatrix}A(z)\\ A^c(z)\end{pmatrix}
		\quad\text{for almost every }z\in B.
		$$
		Since $R_0/k_m\to0$, continuity of the extension yields
		$$
		u_1(z,0)
		=\lim_{m\to\infty}u_1(z,R_0/k_m)
		=-\i\begin{pmatrix}A(z)\\ A^c(z)\end{pmatrix}
		\quad\text{for almost every }z\in B.
		$$
		
		Finally, the change of variables $k=R_0/\kappa$ gives
		\begin{align*}
			\|u_0(z,\cdot)\|_{L^2(\mathcal E)}^2&=R_0^2\int_{\mathcal K_{R_0}}
			|\widetilde{\bmu}(z,k)|^2|k|^{-4}\dA(k)\leq R_0^{-2}
			\|\widetilde{\bmu}(z,\cdot)\|_{L^2(\mathcal K_{R_0})}^2,
			\\
			\|u_1(z,\cdot)\|_{L^2(\mathcal E)}^2&=R_0^2\int_{\mathcal K_{R_0}}
			|\bmu(z,k)|^2|k|^{-2}\dA(k)\leq
			\|\bmu(z,\cdot)\|_{L^2(\mathcal K_{R_0})}^2.
		\end{align*}
		Combining these inequalities with the bounds for
		$u_\ell(z,0)$ proves \eqref{firstTrace} and \eqref{ATrace}.
	\end{proof}	
	
	We are now ready to prove the main primitive estimate. 
	
	\begin{proof}[Proof of Proposition~\ref{prop_fundamental}]
		We need to transfer the right hand side of \eqref{ATrace} to $\widetilde{\bmu}$. Recalling \eqref{difference_def}, we use the identity
		\begin{equation}\label{transferIdentity}
			\bmu=\mathcal W_1^{-1}\widetilde{\bmu}
			+(\mathcal W_1^{-1}-\mathcal W_2^{-1})\widetilde{\bmu}_2.
		\end{equation}
		Since
		$$
		e^{\omega_1}-e^{\omega_2}=-e^{\omega_1+\omega_2}c,
		$$
		boundedness of $\omega_j$ on $\overline{B}$ gives
		$$
		|\mathcal W_1^{-1}(z)|\leq C,\qquad |\mathcal W_1^{-1}(z)-\mathcal W_2^{-1}(z)|\leq C|c(z)| \qquad z\in B,
		$$
		for some $C>0$. The functions $\widetilde{\bmu}_2$ are uniformly bounded on $\overline{B}\times\{|k|>k_0\}$ by \eqref{muinfty}. Taking the $L^2(\mathcal K_{R_0})$ norm of \eqref{transferIdentity} we obtain
		\begin{align*}
			\|\bmu(z,\cdot)\|_{L^2(\mathcal K_{R_0})}
			&\leq C\big(\|\widetilde{\bmu}(z,\cdot)\|_{L^2(\mathcal K_{R_0})}+|c(z)|\big)\\
			&\leq C\|\widetilde{\bmu}(z,\cdot)\|_{L^2(\mathcal K_{R_0})},
		\end{align*}
		for some $C>0$ independent of $z\in \overline{B}$, where we used \eqref{firstTrace} in the last step.
		Next, by combining \eqref{ATrace} with \eqref{gradientNorm} and the previous bound, we obtain that
		$$
		|\nabla\omega(z)|+|c(z)|\leq C \left(\int_{\mathcal K_{R_0}}|\widetilde{\bmu}(z,k)|^2\dA(k)\right)^{1/2}
		\quad\text{for almost every }z\in\C.
		$$
		To finish the proof, let $r$ be the radius of the disk $B$. For $z\in B$ and $k\in\mathcal K_{R_0}$,
		$$
		|e^{-\i kz}|\leq e^{2rR_0}.
		$$
		The relation $\widetilde{\bmu}=e^{-\i kz}\widetilde{\bpsi}$ therefore gives 
		$$
		\int_{\mathcal K_{R_0}}|\widetilde{\bmu}(z,k)|^2\dA(k)\leq e^{4rR_0}\, \int_{\mathcal K_{R_0}}|\widetilde{\bpsi}(z,k)|^2\dA(k),
		$$
		on $B$, which proves \eqref{fundamental_psi} there. Propositions~\ref{prop_exterior_primitives} and~\ref{prop_paired_diff} show that all terms in these inequalities have compact support in $B$. Thus the inequality holds on the whole plane. 
	\end{proof}

	\section{Hilbert-valued unique continuation}\label{sec_carleman}
	
	The final step of the proof requires a weak unique continuation
	principle for functions taking values in a Hilbert space. The
	corresponding scalar result for Schr\"odinger inequalities in the
	plane with $L^p$ coefficients, $p>1$, is classical; see
	\cite[Theorem 2]{ABG81}. We also refer to the foundational work
	of Jerison and Kenig \cite{JK85} on strong unique continuation for
	Schr\"odinger operators with critical potentials in higher dimensions.
	Since we need the planar result at the precise Sobolev regularity
	arising below, with constants independent of the target Hilbert space,
	we include the short proof.
	
	Let $\mathcal H$ be a separable complex Hilbert space. All
	Sobolev spaces below are Bochner spaces, and
	distributional derivatives are understood in the $\mathcal H$-valued
	sense. If $O\subset\C$ is open and $v,F\in L^1_{\loc}(O;\mathcal H)$, then
	$\Delta v=F$ in $\mathcal D'(O;\mathcal H)$ means that
	$$
	\int_O v(z)\Delta\varphi(z)\,\dA(z)
	=
	\int_O F(z)\varphi(z)\,\dA(z)
	\quad\text{in $\mathcal H$}
	$$
	for every scalar test function $\varphi\in C^\infty_c(O)$.
	
	\begin{lemma}\label{lem_carleman}
		Let $p\in (1,2)$, $a\in\C$, $\rho>0$, and $n\in\mathbb N$. Let $B(a,\rho)$ denote the open disk of radius $\rho$ centred at $a$. If
		$$
		v\in W^{1,\pt}(\C;\mathcal H),\qquad
		\Delta v\in L^p(\C;\mathcal H),\qquad
		\supp v\subset B(a,\rho)\setminus\{a\},
		$$
		then
		\begin{equation}\label{spatial_Carleman}
			\big\||z-a|^{-n}v\big\|_{L^\infty(\C;\mathcal H)}
			\leq C_p\,\rho^{2-\frac{2}{p}}\,
			\big\||z-a|^{-n}\Delta v\big\|_{L^p(\C;\mathcal H)},
		\end{equation}
		where $C_p>1$ depends only on $p$ and is in particular independent of $a$, $\rho$, $n$, and $\mathcal H$.
	\end{lemma}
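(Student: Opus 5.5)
The plan is to factor the Laplacian as $\Delta=4\partial\bpartial$ and to absorb the weight $|z-a|^{-n}$ one factor at a time, using weights that commute with each first-order factor. The point is that $(z-a)^{-n}$ is holomorphic and $(\overline z-\overline a)^{-n}$ is antiholomorphic on $\C\setminus\{a\}$. Neither multiplier changes the modulus weight $|z-a|^{-n}$, and each commutes exactly with $\bpartial$ or with $\partial$ respectively, with no lower-order terms. This is why the constant will be independent of $n$. Since $\supp v$ is a compact subset of $B(a,\rho)\setminus\{a\}$, both weights are smooth and bounded on a neighbourhood of the supports of all functions involved. Therefore every product rule below holds in $\mathcal D'(\C;\mathcal H)$ without any difficulty at $a$.

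\textbf{Step 1.} Set $g=\bpartial v\in L^{\pt}(\C;\mathcal H)$, which satisfies $4\partial g=\Delta v$, and put $G=(\overline z-\overline a)^{-n}g$. Because $(\overline z-\overline a)^{-n}$ is antiholomorphic near $\supp g$, we get
$$\partial G=\tfrac14(\overline z-\overline a)^{-n}\Delta v=:F,$$
with $|F|_{\mathcal H}=\tfrac14|z-a|^{-n}|\Delta v|_{\mathcal H}\in L^p$. Both $G$ and $F$ are compactly supported. I then claim $G=\partial^{-1}F$. Indeed, $G-\partial^{-1}F$ is antiholomorphic on $\C$, lies in $L^{\pt}$, and hence vanishes; this can be checked componentwise by pairing with vectors of $\mathcal H$. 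The kernel of $\partial^{-1}$ is scalar, so $|\partial^{-1}F|_{\mathcal H}\le \frac1\pi\int|z-w|^{-1}|F(w)|_{\mathcal H}\dA(w)$ pointwise. Hardy--Littlewood--Sobolev then gives
$$\|G\|_{L^{\pt}}\le C_p\|F\|_{L^p}\le C_p\big\||z-a|^{-n}\Delta v\big\|_{L^p}.$$

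\textbf{Step 2.} Set $w=(z-a)^{-n}v$. Holomorphy of the weight gives
$$\bpartial w=(z-a)^{-n}g=\frac{(\overline z-\overline a)^{n}}{(z-a)^{n}}\,G,$$
so $|\bpartial w|_{\mathcal H}=|G|_{\mathcal H}$, and this function is supported in $B(a,\rho)$. As before, $w$ is compactly supported and lies in $L^{\pt}$, so $w=\bpartial^{-1}(\bpartial w)$. For each $z$, Hölder's inequality then gives
$$|w(z)|_{\mathcal H}\le\frac1\pi\|G\|_{L^{\pt}}\,\big\||z-\cdot|^{-1}\big\|_{L^{\pt'}(B(a,\rho))}.$$
Since $\pt'<2$, the last norm is at most $C\rho^{2/\pt'-1}=C\rho^{1-2/\pt}=C\rho^{2-2/p}$, using $2/\pt=2/p-1$. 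Combining this with Step 1 yields \eqref{spatial_Carleman}, because $|w|_{\mathcal H}=|z-a|^{-n}|v|_{\mathcal H}$.

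\textbf{Main obstacle.} The only delicate point is justifying the two identities $G=\partial^{-1}\partial G$ and $w=\bpartial^{-1}\bpartial w$ in the Bochner setting with the stated regularity. This requires checking that $G,w\in L^{\pt}$, that they are compactly supported, and that a (anti)holomorphic $L^{\pt}$ function on $\C$ is zero. I would reduce these to the scalar case by pairing with a fixed $h\in\mathcal H$. Separability of $\mathcal H$ then gives equality almost everywhere. All constants come from scalar kernel bounds applied to $|\cdot|_{\mathcal H}$, so they are independent of $\mathcal H$, $a$, $\rho$ and $n$.
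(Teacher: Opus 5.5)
Your proposal is correct and follows essentially the same route as the paper: you factor $\Delta=4\partial\bpartial$, commute the holomorphic weight $(z-a)^{-n}$ past $\bpartial$ and the antiholomorphic weight $(\overline z-\overline a)^{-n}$ past $\partial$, and then apply Hardy--Littlewood--Sobolev to one Cauchy transform and H\"older on $B(a,\rho)$ to the other, taking the two factors in the mirror order. The only real difference is that you justify the Cauchy representations directly at the given regularity, via a Liouville argument for (anti)holomorphic $L^{\pt}$ functions. The paper instead proves the smooth case and passes to the limit by mollification; both arguments work.
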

	
	\begin{proof}
		We start with some observations about Cauchy transforms of Hilbert-valued functions where the Cauchy transforms act by Bochner integration. Firstly, for almost every $z\in\C$,
		$$
		\big|\bpartial^{-1}f(z)\big|_{\mathcal H}
		\leq
		\frac1\pi\int_{\C}
		\frac{|f(w)|_{\mathcal H}}{|z-w|}\,\dA(w).
		$$
		Applying the Hardy-Littlewood-Sobolev inequality
		to the function $|f|_{\mathcal H}$, we obtain
		\begin{equation}\label{hilbertCauchy}
			\|\bpartial^{-1}f\|_{L^{\pt}(\C;\mathcal H)}
			\leq C\|f\|_{L^p(\C;\mathcal H)}.
		\end{equation}
		Secondly, we also observe that for $g\in L^{\pt}(\C;\mathcal H)$ supported in $\overline{B(a,\rho)}$, H\"older's inequality gives
		\begin{equation}\label{hilbertCauchyInfinity}
			\|\partial^{-1}g\|_{L^\infty(\C;\mathcal H)}
			\leq C\,\rho^{2-\frac{2}{p}}\,\|g\|_{L^{\pt}(\C;\mathcal H)},
		\end{equation}
		since $\pt':=\pt/(\pt-1)<2$ and
		$$
		\sup_{z\in\C}\int_{B(a,\rho)}|z-w|^{-\pt'}\dA(w)
		\leq C\rho^{2-\pt'}.
		$$
		
		To prove the lemma, first suppose that $v$ is smooth and $\supp v \subset B(a,\rho) \setminus \{a\}$. The Cauchy representation gives
		\begin{align*}
			(z-a)^{-n}\partial v
			&=\frac14\bpartial^{-1}\big((z-a)^{-n}\Delta v\big),\\
			(\overline z-\overline a)^{-n}v
			&=\partial^{-1}\big((\overline z-\overline a)^{-n}\partial v\big).
		\end{align*}
		Apply \eqref{hilbertCauchy} to the first identity and \eqref{hilbertCauchyInfinity} to the second. Since the two powers have the same modulus, this proves \eqref{spatial_Carleman}. 
		
		For the general case, let $v_\varepsilon=\eta_\varepsilon*v$,
		where $\eta_\varepsilon$ is a standard scalar mollifier. Since
		$\supp v\subset B(a,\rho)\setminus\{a\}$, the functions
		$v_\varepsilon$ are supported in a fixed compact subset of
		$B(a,\rho)\setminus\{a\}$ for all sufficiently small $\varepsilon$.
		Moreover,
		$$
		v_\varepsilon\longrightarrow v
		\quad\text{in }W^{1,\pt}(\C;\mathcal H),
		\qquad
		\Delta v_\varepsilon
		=(\Delta v)*\eta_\varepsilon
		\longrightarrow\Delta v
		\quad\text{in }L^p(\C;\mathcal H).
		$$
		The Sobolev embedding
		$W^{1,\pt}(\C;\mathcal H)\hookrightarrow L^\infty(\C;\mathcal H)$,
		together with the fact that the weights are bounded on this fixed
		compact set, allows us to pass to the limit.
	\end{proof}
	
	\begin{proposition}\label{prop_weak_con}
		Let $p\in (1,2)$ and let $O\subset\C$ be a nonempty open and connected set. Suppose that
		$$
		u\in W^{1,\pt}(O;\mathcal H),\qquad
		\Delta u\in L^p(O;\mathcal H),
		$$
		and also that
		\begin{equation}\label{hilbertDifferentialInequality}
			|\Delta u|_{\mathcal H}\leq V|u|_{\mathcal H}
			\quad\text{almost everywhere in $O$},
		\end{equation}
		for some nonnegative function $V\in L^p(O)$. If $u$ vanishes on a nonempty open subset of $O$, then $u=0$ on $O$.
	\end{proposition}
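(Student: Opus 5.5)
Let $Z\subset O$ be the interior of the zero set of $u$. It is open and nonempty by hypothesis. Since $O$ is connected, it suffices to show that $Z$ is relatively closed in $O$. So let $a\in O$ lie in the closure of $Z$ in $O$; we must show that $u$ vanishes near $a$.

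\textbf{Reduction.} Choose $\rho>0$ with $\overline{B(a,2\rho)}\subset O$. We will shrink $\rho$ so that $\|V\|_{L^p(B(a,2\rho))}$ is small, which is possible by absolute continuity of the integral. Since $a\in\overline Z$, there is a point $b\in Z$ with $|b-a|<\rho/4$, and a small disk $B(b,\delta)\subset Z$. Recentre at $b$: then $B(b,\rho)\subset B(a,2\rho)$, and $u$ vanishes on $B(b,\delta)$. It is enough to show that $u=0$ on $B(b,\rho/2)$, since this disk contains $a$.

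\textbf{Carleman step.} Take a scalar cutoff $\chi\in C^\infty_c(B(b,\rho))$ with $\chi=1$ on $B(b,\rho/2)$, and set $v=\chi u$, extended by zero. Then $v\in W^{1,\pt}(\C;\mathcal H)$, and
$$\Delta v=\chi\Delta u+2\nabla\chi\cdot\nabla u+u\Delta\chi\in L^p(\C;\mathcal H),$$
using that $\nabla u\in L^{\pt}\subset L^p_{\loc}$ and $u\in L^\infty_{\loc}$ by Sobolev embedding. Moreover $\supp v\subset B(b,\rho)\setminus B(b,\delta)$, which avoids $b$. Apply Lemma~\ref{lem_carleman} with centre $b$. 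Write $w=|z-b|^{-n}$ and $G=B(b,\rho/2)$. On $G$ we have $|\Delta v|\le V|v|$, so
$$\|w\Delta v\|_{L^p}\le \|V\|_{L^p(G)}\|wv\|_{L^\infty}+\|w\,F\|_{L^p(\rho/2<|z-b|<\rho)},$$
where $F$ collects $\chi\Delta u$ together with the commutator terms.

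\textbf{Absorption.} By Lemma~\ref{lem_carleman},
$$\|wv\|_{L^\infty}\le C_p\rho^{2-2/p}\bigl(\|V\|_{L^p}\|wv\|_{L^\infty}+(\rho/2)^{-n}\|F\|_{L^p}\bigr).$$
The constant $C_p$ does not depend on $n$, $b$, or $\mathcal H$. Choosing $\rho$ at the start so that $C_p\rho^{2-2/p}\|V\|_{L^p(B(a,2\rho))}\le 1/2$, we absorb the first term and get
$$\|wv\|_{L^\infty}\le C\rho^{2-2/p}(\rho/2)^{-n}\|F\|_{L^p}.$$
The norm $\|wv\|_{L^\infty}$ is finite because $v$ is bounded and vanishes near $b$. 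Now suppose $|u(z)|_{\mathcal H}>0$ at a point with $|z-b|=r<\rho/2$. The left side is at least $r^{-n}|u(z)|$, so
$$|u(z)|\le C(2r/\rho)^n\|F\|_{L^p}.$$
Letting $n\to\infty$ gives a contradiction. Hence $u=0$ a.e. on $B(b,\rho/2)$, and so $a$ lies in $Z$.

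\textbf{Main obstacle.} The main point is bookkeeping, not new ideas. The smallness must come only from $\|V\|_{L^p}$ on a small disk, uniformly in $n$; this works because $\rho^{2-2/p}$ is bounded and $C_p$ is independent of $n$. We also need $V$ to be integrable on a neighbourhood of $a$, which holds since $V\in L^p(O)$, and the recentring at $b$ guarantees that $v$ vanishes near the weight's singularity.
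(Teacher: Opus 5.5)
Your proposal is correct and follows essentially the same route as the paper: recentre at a nearby point $b$ where $u$ already vanishes, cut off, apply Lemma~\ref{lem_carleman} with the weight $|z-b|^{-n}$, absorb the $V$-term by taking the radius small, let $n\to\infty$, and conclude by connectedness. The differences are cosmetic. You use local smallness of $\|V\|_{L^p}$ rather than the global norm, you keep $\chi\Delta u$ in the annular error term, and you explicitly check $\|wv\|_{L^\infty}<\infty$ before absorbing, which the paper leaves implicit.
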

	
	\begin{proof}
		Since $p \in (1,2)$ and $V\in L^p(O)$, we may fix $\rho_0>0$ such that
		\begin{equation}\label{local_abs}
			C_p\,\rho_0^{2-\frac{2}{p}}\,\|V\|_{L^p(O)}\leq\frac{1}{2},
		\end{equation}
		where $C_p$ is the constant in Lemma~\ref{lem_carleman}. Let $0<\rho\leq\rho_0$ be such that $\overline{B(a,\rho)}\subset O$. Suppose that $u$ vanishes near $a$. We first show that $u$ vanishes on $B(a,\rho/4)$.
		
		Take $\chi\in C^\infty_c(B(a,\rho))$, with $0\leq\chi\leq1$ and $\chi=1$ on $B(a,\rho/2)$, and define $v=\chi u$ on $O$ and extend it by zero to $\C$. Its distributional Laplacian is
		$$
		\Delta v=\chi\Delta u+2\nabla\chi\cdot\nabla u+(\Delta\chi)u.
		$$
		Thus $v$ satisfies the hypotheses of Lemma~\ref{lem_carleman} and
		$$
		|\Delta v|_{\mathcal H}\leq V|v|_{\mathcal H}+h,\qquad
		h=2|\nabla\chi||\nabla u|_{\mathcal H}+|\Delta\chi||u|_{\mathcal H}.
		$$
		Here $h\in L^p$ has support where $\rho/2\leq|z-a|<\rho$; its $L^p$-integrability follows from $\pt>p$ and the compact support of the derivatives of $\chi$. Writing $w_n=|z-a|^{-n}$, the lemma gives
		$$
		\|w_nv\|_{L^\infty}
		\leq C_p\rho^{2-\frac{2}{p}}
		\left(\|V\|_{L^p(O)}\|w_nv\|_{L^\infty}
		+(\rho/2)^{-n}\|h\|_{L^p}\right).
		$$
		Since $\rho\leq\rho_0$, \eqref{local_abs} allows us to absorb the first term on the right. Consequently,
		$$
		\|u\|_{L^\infty(B(a,\rho/4);\mathcal H)}
		\leq 2C_p\,\rho^{2-\frac{2}{p}}\, 2^{-n}\|h\|_{L^p}.
		$$
		All quantities on the right except $2^{-n}$ are fixed, so $n\to\infty$ proves the local assertion.
		
		Let $Z$ consist of the points having a neighborhood on which $u=0$. It is nonempty and open. If $x\in\overline Z\cap O$, choose $0<\rho\leq\rho_0$ with $\overline{B(x,2\rho)}\subset O$, and take $a\in Z$ such that $|a-x|<\rho/8$. Then $\overline{B(a,\rho)}\subset O$, so the local assertion gives $u=0$ on $B(a,\rho/4)$, hence near $x$. Thus $Z$ is relatively closed, and connectedness gives $Z=O$.
	\end{proof}
	
	\section{Proof of Theorem~\ref{main_thm}}\label{sec_uniqueness}
	
	\begin{proof}[Proof of Theorem~\ref{main_thm}]
		We assume without loss of generality that $1<p<2$. Fix $R_0>k_0$ so that \eqref{chooseR} is satisfied, and put
		\begin{equation}\label{frequencyHilbertSpace}
			\mathcal H=L^2(\mathcal K_{R_0};\C^2),\qquad
			U(z)=\mathcal W_1(z)^{-1}\widetilde{\bpsi}(z,\cdot),
		\end{equation}
		where $\mathcal K_{R_0}$ is as in \eqref{frequency_ann} and we recall the notations in \eqref{Epsi}-\eqref{difference_def}. We identify each family $\bpsi_j$ with the $\mathcal H$-valued map $z\mapsto\bpsi_j(z,\cdot)$. Throughout the proof, we work with the restrictions
		of these maps and $U$ to $B$, retaining the same notation. All distributional identities below are understood in the spatial variable on $B$.
		
		Writing $U(z)(k)=(U_1(z,k),U_2(z,k))^{\mathsf T}$, we have
		\begin{equation}\label{spatial_fam}
			U_1=\psi_1-e^\omega\psi_2,\qquad
			U_2=\psi_1^c-e^{\overline\omega}\psi_2^c,
		\end{equation}
		where we recall the notation \eqref{mu_j}. Both $\mathcal W_1$ and its inverse are bounded on $B$. Hence Proposition~\ref{prop_fundamental} gives a $C\geq 1$ such that
		\begin{equation}\label{gradient_fam}
			|\nabla\omega(z)|\leq C|U(z)|_{\mathcal H}
			\quad\text{for almost every $z\in B$}.
		\end{equation}
		Proposition~\ref{prop_paired_diff} also shows that $U$ vanishes on a neighborhood of $\partial B$.
		
		We verify the hypotheses of Proposition~\ref{prop_weak_con}.
		Lemma~\ref{lem_cgo}, together with the boundedness of $B$ and
		$\mathcal K_{R_0}$, gives
		\begin{equation}\label{uniform_spatial_fam}
			\sup_{k\in\mathcal K_{R_0}}\sum_{j=1}^2
			\left(
			\|\bpsi_j(\cdot,k)\|_{L^\infty(B)}
			+
			\|\bpsi_j(\cdot,k)\|_{W^{1,\pt}(B)}
			\right)<\infty.
		\end{equation}
		By Lemma~\ref{lem_kregularity}, these families are strongly
		measurable. Since $\mathcal K_{R_0}$ has finite measure and
		$\pt>2$, Minkowski's integral inequality and Fubini's theorem give
		$$
		\bpsi_j\in
		W^{1,\pt}(B;\mathcal H)\cap L^\infty(B;\mathcal H),
		\qquad j=1,2.
		$$
		Since
		$$
		\omega\in W^{1,\pt}(B)\cap L^\infty(B),
		$$
		it follows that
		$$
		U\in W^{1,\pt}(B;\mathcal H)\cap L^\infty(B;\mathcal H).
		$$
		Define
		$$
		\mathcal G_1(z):=|\bpsi_2(z,\cdot)|_{\mathcal H},
		\qquad
		\mathcal G_2(z):=|\nabla\bpsi_2(z,\cdot)|_{\mathcal H}.
		$$
		Then,
		$$
		\mathcal G_1\in L^\infty(B),
		\qquad
		\mathcal G_2\in L^{\pt}(B).
		$$
		
		Recall also that for every $k\in\mathcal K_{R_0}$,
		$$
		\psi_j(\cdot,k),\ \psi_j^c(\cdot,k),\ \omega
		\in W^{2,p}(B).
		$$
		Using $\Delta\omega=q_1-q_2$, we obtain
		\begin{equation}\label{spatial_fam_eq}
			(-\Delta+q_1)U_1
			=
			e^\omega\left(
			2\nabla\omega\cdot\nabla\psi_2
			+(\nabla\omega\cdot\nabla\omega)\psi_2
			\right),
		\end{equation}
		where we recall the dot products above are complex bilinear. The second component satisfies the analogous identity obtained by
		replacing $(q_1,\omega,\psi_2)$ with
		$(\overline{q_1},\overline\omega,\psi_2^c)$.
		
		Let $F(z,k)$ denote the vector formed by the two right hand sides.
		Then
		$$
		|F(z,\cdot)|_{\mathcal H}
		\leq
		C\left(
		|\nabla\omega(z)|\,\mathcal G_2(z)
		+
		|\nabla\omega(z)|^2\,\mathcal G_1(z)
		\right).
		$$
		Since $\nabla\omega,\mathcal G_2\in L^{\pt}(B)$ and
		$\mathcal G_1\in L^\infty(B)$, the right hand side belongs to
		$L^{\pt/2}(B) \subset L^{p}(B)$ implying that 
		$$
		F\in L^p(B;\mathcal H).
		$$
		Fubini's theorem applied to the componentwise distributional
		identities now gives
		$$
		\Delta_z U=\mathsf Q_1U-F
		\quad\text{in }\mathcal D'(B;\mathcal H),
		\qquad
		\mathsf Q_1=\diag(q_1,\overline{q_1}).
		$$
		In particular, $\Delta_zU\in L^p(B;\mathcal H)$, and
		$$
		|\Delta_zU|_{\mathcal H}
		\leq
		|q_1|\,|U|_{\mathcal H}
		+
		C\left(
		\mathcal G_2+|\nabla\omega|\mathcal G_1
		\right)|\nabla\omega|.
		$$
		Combining this with \eqref{gradient_fam}, we obtain
		$$
		|\Delta_zU|_{\mathcal H}
		\leq V|U|_{\mathcal H},
		$$
		where
		$$
		V=
		|q_1|
		+
		C\left(
		\mathcal G_2+|\nabla\omega|\mathcal G_1
		\right)
		\in L^p(B).
		$$

		Proposition~\ref{prop_weak_con}, applied with $O=B$,
		therefore gives $U=0$ on $B$.
		Equation~\eqref{gradient_fam} then yields
		$\nabla\omega=0$ almost everywhere in $B$.
		Since $\Delta\omega=q_1-q_2$, we conclude that
		$q_1=q_2$ almost everywhere in $\Omega$.
	\end{proof}

\end{document}